\documentclass[11pt,reqno]{amsart}

\usepackage[T1]{fontenc}
\usepackage[utf8]{inputenc}
\usepackage{lmodern}

\usepackage[american]{babel}
\usepackage{todonotes}
\usepackage{comment}
\usepackage{amsmath}
\usepackage{dsfont,mathtools,amssymb,amsmath}
\usepackage{color}
\usepackage[hidelinks]{hyperref}
\mathtoolsset{showonlyrefs,showmanualtags}
\usepackage{xfrac}
\usepackage[shortlabels]{enumitem}
\usepackage{graphicx}

\newtheorem{theorem}{Theorem}[section]
\newtheorem{proposition}[theorem]{Proposition}
\newtheorem{lemma}[theorem]{Lemma}
\newtheorem{corollary}[theorem]{Corollary}
\newtheorem{remark}[theorem]{Remark}
\newtheorem{definition}[theorem]{Definition}

\newtheorem{conjecture}[theorem]{Conjecture}

\numberwithin{equation}{section}
\numberwithin{figure}{section}

\newcommand{\E}{\mathds{E}}
\renewcommand{\P}{\mathds{P}}

\newcommand{\X}{\mathbf{X}}
\newcommand{\R}{\mathbb{R}}
\newcommand{\T}{\mathbb{T}}

\newcommand{\Z}{\mathbb{Z}}
\newcommand{\N}{\mathbb{N}}
\newcommand{\dd}{{\rm d}}
\newcommand{\fstop}{\; \text{.}}
\newcommand{\comma}{\; \text{,}\;\;}

\newcommand{\tonde}[1]{\left(#1\right)}
\newcommand{\quadre}[1]{\left[#1\right]}
\newcommand{\ttonde}[1]{\big(#1\big)}
\newcommand{\tttonde}[1]{(#1)}
\newcommand{\abs}[1]{\left\lvert#1\right\rvert}

\newcommand{\emparg}{\,\cdot\,}
\newcommand{\emp}{\varnothing}
\newcommand{\eqdef}{\coloneqq}

\newcommand{\car}{\mathds{1}}

\newcommand{\ttnorm}[1]{\|#1\|}
 
\newcommand{\eps}{\varepsilon} 
\newcommand{\var}{{\rm Var}}

\newcommand{\cC}{\ensuremath{\mathcal C}}

\newcommand{\cF}{\ensuremath{\mathcal F}} 
\newcommand{\cG}{\ensuremath{\mathcal G}} 
 
\newcommand{\cI}{\ensuremath{\mathcal I}} 
\newcommand{\cJ}{\ensuremath{\mathcal J}} 
 
\newcommand{\cL}{\ensuremath{\mathcal L}}

\newcommand{\cP}{\ensuremath{\mathcal P}}

\newcommand{\cS}{\ensuremath{\mathcal S}} 
\newcommand{\cT}{\ensuremath{\mathcal T}}

\newcommand{\cW}{\ensuremath{\mathcal W}} 
 
\newcommand{\cY}{\ensuremath{\mathcal Y}}

\newcommand{\efc}{\color{black}}

\newcommand{\DEP}{{\rm \scriptscriptstyle DEP}}
\newcommand{\Toom}{{\rm \scriptscriptstyle Toom}}

\allowdisplaybreaks

\begin{document} 
	
\title[Hydrodynamic limit of DEP]{\small Hydrodynamic limit of the directed exclusion process}

\author{Ellen Saada}
\author{Federico Sau}
\author{Assaf Shapira}

\makeatletter
\def\@setauthors{%
	\begingroup
	\trivlist
	\centering\footnotesize
	\item\relax
	\vskip 2.0em 
	{\normalsize
	Ellen Saada$^{a}$\quad
Federico Sau$^{b}$\quad
	Assaf Shapira$^{c}$}\par
	\vskip 2.0em
	$^{a}$\mbox{\small CNRS, UMR 8145, MAP5, Universit\'e Paris Cit\'e}
	\\ \texttt{ellen.saada@mi.parisdescartes.fr}\par
	\vskip 0.8em
	$^{b}$\mbox{\small	Dipartimento di Matematica \textquotedblleft F. Enriques\textquotedblright,
	Università degli Studi di Milano}
	\\ \texttt{federico.sau@unimi.it}\par\vskip 0.8em
	$^{c}$\mbox{\small Universit\'e Paris Cit\'e, CNRS, MAP5}
	\\ \texttt{assaf.shapira@math.cnrs.fr}\par
	\endtrivlist
	\endgroup
}
\makeatother

\maketitle

\thispagestyle{empty}

\begin{center}{\textit{Dedicated to Claudio Landim on the occasion of his 60th birthday}}
	\end{center}

		\begin{abstract}
	We derive the Euler (hyperbolic) hydrodynamic limit for the directed exclusion process (${\rm DEP}$), a one-dimensional conservative interacting particle system that preserves particle–hole symmetry while breaking left–right symmetry. The proof  relies on an explicit multi-process coupling, which guarantees a strong form of attractiveness and macroscopic stability for the particle system. Further open questions about ${\rm DEP}$ are briefly discussed.
	\end{abstract}

		\thispagestyle{empty}	

\section{Introduction}\label{sec:intro}

The (symmetric) directed exclusion process (${\rm DEP}$) is an interacting
particle system studied in the physics literature as a simple example
of a model belonging to the \emph{advected Edwards--Wilkinson} ed
universality class \cite{devilliard_spohn_universality_1992,binder1994scaling,shapira_wiese_anchored_2023}.
This universality class consists of models preserving the particle-hole
symmetry, but breaking the directional left-right symmetry, and includes
also the very well studied Toom interface model 
\cite{derrida_dynamics_1991,crawford2016invariance,crawford_kozma_toom_2020}.

For ${\rm DEP}$, particles are placed on $\Z$ with exclusion (i.e., at most
one particle per site). Then, each of the following transitions occurs
with rate $1$: 
\begin{enumerate}
	\item A particle at $x$ with a neighboring empty site $y=x\pm1$ jumps
	to $y$. 
	\item A particle at $x$ with a particle to its right at $x+1$ and an empty
	site at $x+2$ jumps to $x+2$. 
	\item A particle at $x$ with two empty sites to its left, at $x-1$ and
	$x-2$, jumps to $x-2$. 
\end{enumerate}
Let $\eta\in\{0,1\}^{\Z}$ be the particle configuration, so the
hole configuration is $\widetilde{\eta}=1-\eta$. One can verify that both
$\eta$ and $\widetilde{\eta}$ evolve as the same process,
i.e., the law of ${\rm DEP}$ is invariant under particle-hole symmetry.

It is instructive to compare this model with the most renowned \emph{symmetric
simple	exclusion process} (SSEP), where only the first of the three transitions
above occur. The SSEP has the same particle-hole symmetry, in addition
to a directional symmetry: the configuration $\eta_{\text{SSEP}}$
of SSEP evolves according to the same law as the reflected configuration
$\left(\eta_{\text{SSEP}}(-x)\right)_{x\in\Z}$. A natural way to
break the directional symmetry of SSEP is to give different rates
to jumps to the right and to the left, obtaining the \emph{asymmetric
	simple exclusion process}. This, however, will also break the particle-hole
symmetry with it. 

In ${\rm DEP}$, just like in ${\rm SSEP}$, any Bernoulli product measure $\nu_\rho$, $\rho \in [0,1]$, is stationary (see Section \ref{sec:model+properties}). However,  unlike SSEP, directional symmetry is broken, although particle-hole
symmetry is preserved. This last property  is the reason why one expects ${\rm DEP}$'s equilibrium
density fluctuation field  at criticality (i.e., around particle
density $\rho=1/2$) to behave in the limit according to the
\emph{advected Edwards--Wilkinson equation} on $\R$ (see also Section
\ref{sec:adEW}): 
\begin{equation}
	\partial_{t}\cY=\partial_x\tttonde{-\mu \cY+D\partial_x \cY+\sigma \cW}\ ,\label{eq:aEW}
\end{equation}
where $\cW$ is a space-time white noise, and $\mu,D$ and $\sigma$ positive
coefficients. We note that the directional symmetry breaking allows
for a non-zero advection term $\mu\partial_x \cY$, while the particle-hole
symmetry forbids a KPZ-type term $\cY\partial_x \cY$.

\subsection{Hydrodynamics}\label{subsec:hydro}
The purpose of this paper is to make a first step into the
analysis of large scale limits of ${\rm DEP}$, by proving a hydrodynamic limit
\cite{de_masi_mathematical_1991,kipnis_scaling_1999} for the model:
provided that the empirical density field at the initial time approximates
a profile $u_{0}:\R\to[0,1]$, then, under a hyperbolic space-time
scaling, ${\rm DEP}$ approximates, at any later time $t>0$, the profile $u(\emparg,t):\R\to[0,1]$,
suitable solution to $u(\emparg,0)=u_0$ and
\begin{equation}
	\partial_{t}u  +\partial_x G_{\DEP}(u)=0\comma\quad\text{with }G_{\DEP}(u)\eqdef2u\tonde{1-u}\tonde{2u-1}\fstop
	\label{eq:HDL}
\end{equation}
The precise result is the content of Theorem \ref{th:HDL}.
Its proof is based on  a constructive method developed in 
\cite{bahadoran_constructive_2002,bahadoran_euler_2006,bgrs3,bgrs5},
well suited for hydrodynamic limits of one-dimensional conservative attractive particle systems
under a hyperbolic space-time scaling. Its first step is 
to derive ``Riemann hydrodynamics'' (i.e., for the case where
$u_0$ is a one-step function); then, to prove general (Cauchy) hydrodynamics 
through an approximation scheme inspired by Glimm's scheme for conservation laws. 
The latter requires the following essential properties of the dynamics
 (defined and derived in Section \ref{sec:coupling}): 
(a) monotonicity of an arbitrary number of copies of the system; (b) 
macroscopic stability; (c) finite propagation property.
Our general strategy builds upon the construction
of suitable couplings which  guarantee these properties. 
\smallskip

In the remainder of this section, we discuss some further open questions for ${\rm DEP}$.

\subsection{Fluctuations at criticality}\label{sec:adEW}

One can see from the hydrodynamic equation \eqref{eq:HDL} that $\rho=1/2$
is a critical density of ${\rm DEP}$: when $\rho<1/2$ there is an
overall particle current to the left ($G(\rho)<0)$, while for $\rho>1/2$
the current is to the right.

Consider a (small) scale parameter $\eps\in(0,1)$, and a scaling function $s_\eps$ that will associate to a macroscopic time $t$ the microscopic time $s_\eps(t)$; for diffusive scaling, for example, $s_\eps(t) = \eps^{-2} t$.
 Starting ${\rm DEP}$ from the (critical) stationary Bernoulli product measure $\nu_{1/2}$,
we consider the fluctuation field at scale $\varepsilon$ associated
with the configuration $\eta_{s_\eps(t)}$,  acting on  test functions $f:\R\to\R$
as 
\begin{equation}\label{eq:field-tilde}
	\widetilde\cY_{t}^\eps(f)=\varepsilon^{1/2}\sum_{x\in\Z}f(\varepsilon x-\varepsilon \mu s_\eps(t))\left(\eta_{s_\eps(t)}(x)-\frac{1}{2}\right)\ ,
\end{equation}
where $\mu=1$ is the conjectured coefficient of the advection term
in the equation \eqref{eq:aEW}.

Let us look at the time evolution of $\widetilde\cY_{t}^{\varepsilon}$ more closely. Letting $\cL$ denote the infinitesimal generator of ${\rm DEP}$ (cf.\ \eqref{eq:generator}), a short calculation (see \eqref{eq:Leta} or \cite{shapira_wiese_anchored_2023}) shows that 
\begin{align}\label{eq:pre-replacement}
	\begin{aligned}\cL \widetilde\cY_{t}^{\varepsilon}(f) & =\varepsilon^{1/2}\,\varepsilon^{2}\sum_{x\in\Z}f''(\varepsilon x-\varepsilon \mu s_\eps(t))\left(\eta_{s_\eps(t)}(x)-\frac{1}{2}\right)\\
		& +\varepsilon^{1/2}\,4\varepsilon\sum_{x\in\Z}\car_{\eta(x-1)=\eta(x)\neq\eta(x+1)}\,f'(\varepsilon x-\varepsilon \mu s_\eps(t))\left(\eta_{s_\eps(t)}(x)-\frac{1}{2}\right)\comma
	\end{aligned}
\end{align}
up to lower order terms arising from Taylor expansions of $f$.
If we had some type of replacement lemma with respect to $\nu_{1/2}$, we could rewrite the last
term as 
\begin{equation}
	\varepsilon^{1/2}\,\varepsilon\sum_{x\in\Z}f'(\varepsilon x-\varepsilon \mu s_\eps(t))\left(\eta_{s_\eps(t)}(x)-\frac{1}{2}\right)\fstop\label{eq:replacement}
\end{equation}
Thanks to the choice $\mu=1$, imposing a diffusive space-time scaling (i.e., setting $s_\eps(t)=\eps^{-2} t$) would exactly cancel the time
derivative of $\widetilde\cY_{t}^{\varepsilon}$, yielding
\begin{equation}
	\frac{{\rm d}}{{\rm d}t}\E\big[\widetilde\cY_{t}^{\varepsilon}(f)\big]\approx \E\big[\widetilde\cY_{t}^{\varepsilon}(f'')\big]\fstop
\end{equation}
 Further, we may expect the quadratic variation to scale as for SSEP: for all $t\ge 0$,
\begin{equation}
	\var(\widetilde\cY_{t+\dd t}^{\varepsilon}\mid \eta_{t})  \propto  \eps^{-2}\eps\sum_{x\in \Z}(\eps f'(\eps x-\eps^{-1} \mu t))^2\,\dd t \approx 
	 \ttnorm{f'}_{L^2(\R)}^2\dd t\fstop
\end{equation}
If integrated over time, the right-hand side  describes the variance of  $\int \partial_xf\,\cW$. Hence,  the above heuristic arguments seem to suggest that, for small $\eps\in (0,1)$, the field in \eqref{eq:field-tilde} is an approximate solution to the (non-advected) Edwards-Wilkinson equation: 
\begin{equation}\label{eq:apnaEW}
	\partial_t\widetilde\cY_{t}^{\varepsilon}\approx\partial_x\big(D\partial_x \widetilde\cY_{t}^{\varepsilon}+\sigma \cW\big)\ .
\end{equation}
This is the way we interpret \eqref{eq:aEW}: first change to a frame
of reference that moves with microscopic speed $\varepsilon\mu$; then, under
diffusive scaling, the field converges to a solution of  
\begin{equation}
	\partial_t\cY_t=\partial_x\big(D\partial_x \cY_t+\sigma \cW\big)\fstop\label{eq:EW}
\end{equation}
We stress that if one scales diffusively without  adjusting a
frame of reference (i.e., one sets $\mu\neq 1$ in \eqref{eq:field-tilde}), the speed in diffusive time diverges as $\varepsilon^{-1}(\mu-1)$,
meaning that advection is at a much faster scale than diffusion.

Unfortunately, replacing $\car_{\eta(x-1)=\eta(x)\neq\eta(x+1)}$
by its expectation (in passing from \eqref{eq:pre-replacement} to
\eqref{eq:replacement}) is not allowed: this is reflected in the
fact that an additional $\cY^{2}\partial_x \cY$ term in \eqref{eq:EW} is
\emph{marginally relevant}, and cannot be simply neglected. It is
generically expected in such cases that the limiting field is still
described by \eqref{eq:EW}, but with logarithmic corrections to the scaling;  in this case, this correction is conjectured to be  \cite{vanbeijeren_kutner_spohn_excess_1985,paczuski1992fluctuations,devilliard_spohn_universality_1992,spohn_nonlinear_2014,cannizzaro_erhard_toninelli_stationary_2023}
\begin{equation}\label{eq:t-eps-s}
	t=\varepsilon^{2}s\log(s)^{1/2}\comma
\end{equation}
that is, the scaling function $s_\eps(t)$ is given by solution of this equation. We note that, for fixed $t>0$,  $s_\eps(t)\sim  t\eps^{-2}|\log \eps|^{-1/2}$ as $\eps\to 0$.

\begin{conjecture}\label{conj:fluctuations}
	The field $(\cY_t^\eps)_{t\ge 0}$ given, for all test functions $f:\R\to \R$ and $t\ge 0$, by
	\begin{equation}\label{eq:field-micro}
		\cY_t^\eps(f)\eqdef \eps^{1/2}\sum_{x\in \Z}f(\eps x-\eps s_\eps(t))\tonde{\eta_{s_\eps(t)}(x) -\frac12}
	\end{equation} converges as $\eps\to 0$, in the appropriate distributional
	space, to the infinite dimensional Ornstein--Uhlenbeck process described
	by the equation in \eqref{eq:EW}.
\end{conjecture}

\subsection{Dynamics with boundary}\label{subsec:boundary}

A very interesting variant of ${\rm DEP}$ is on the half-line $\N^*\eqdef \{1,2,\ldots\}$, not allowing
particles to jump beyond $0$. In this setting, we expect ${\rm DEP}$ to exhibit
 \emph{self-organized criticality}: that is, starting form any generic
density profile, the system converges, in the long run and under a suitable space-time scaling, to  criticality and, more specifically, the one corresponding to a flat profile of constant density $\rho =1/2$. 

This phenomenon can be already guessed from
the hydrodynamic limit equation \eqref{eq:HDL}: by adding a boundary
condition requiring the current to vanish, one gets $G(u(0,t))=0$, $t>0$, which
implies  $\overline{u}=\lim_{t\to \infty}u(\emparg,t)\equiv 1/2$ (provided one can exclude the degenerate cases $\overline{u}\equiv 0$ and $1$).
 Intuitively, at the microscopic level,
excess of particles induces a right current, sending particles to
infinity;  low density induces a left current, sending holes
to infinity. Thus, the system is expected to organize itself in the
critical state, with vanishing current. A rigorous  derivation of this behavior is an open problem, on which we plan to progress in the  future.

It is worth to mention that self-organized criticality is shown in \cite{crawford_kozma_toom_2020}
for a related model,   also belonging to the advected Edwards-Wilkinson
universality class: the Toom interface model. This model is similar to ${\rm DEP}$, except that it allows
infinite-range jumps: while for ${\rm DEP}$ particles may jump over a single
particle to the right, in Toom's model particles can jump over arbitrarily
many particles to their right for reaching the first empty site to their right. For Toom's model, heuristic arguments suggest an hyperbolic hydrodynamic limit with a flux function given by 
\begin{equation}
G_{\Toom}(u)\eqdef	\frac{u}{1-u}-\frac{1-u}u\comma\qquad u \in [0,1]\fstop
\end{equation} On the one hand, infinite range jumps
simplify the analysis, allowing for a coupling where discrepancies
disappear with a fixed rate. On the other hand, one must be careful
in even defining the model, and neither uniform bounds nor finite-propagation properties can be used when deriving
hydrodynamic limits.

An important feature of self-organized criticality is that it allows
us to observe non-trivial scaling exponents, without the need to fine-tune
the model's parameters. Models in the advected Edwards-Wilkinson universality
class on the half-line are expected to be \emph{hyperuniform} \cite{derrida_dynamics_1991,shapira_wiese_anchored_2023}.
That is, the number of particles in the interval $[0,L]$ has variance
much smaller than $L$. The works \cite{derrida_dynamics_1991,paczuski1992fluctuations,devilliard_spohn_universality_1992,shapira_wiese_anchored_2023} propose a more precise prediction for
this universality class, indicating that the variance should scale as $L^{1/2}\log(L)^{1/4}$.

An $L^{1/2}$ scaling can be shown using explicit calculations for
the limiting equation \eqref{eq:aEW} on $\R_+\eqdef (0,\infty)$ with the appropriate
boundary condition \cite{pruessner2004drift,shapira_wiese_anchored_2023}.
We will describe here the intuition leading to this result, with the
additional logarithmic correction. Fix an integer $L> 1$, and recall the field $\cY_t^\eps$ defined in \eqref{eq:field-micro}.  Then, the fluctuation of the number of particles in the interval $[1,L]$ at time $t=0$ reads as	
\begin{equation}
\cY_0^\eps(\psi_L^\eps)=	\sum_{x=1}^L\tonde{\eta_0(x)-\frac12} \comma\quad \text{with}\ \psi_L^\eps \eqdef \eps^{-1/2}\,\car_{(0,\eps L]}\fstop
\end{equation}
We will see how this number evolves between time $-t<0$ and time $0$. The scaling invariance
of the Edwards--Wilkinson equation \eqref{eq:EW} and Conjecture \ref{conj:fluctuations} formally imply
\begin{equation}\label{eq:scaling-var-tilde}
	\var\!\left(\cY_{0}^{\varepsilon}(\psi_{L}^\eps)-\cY_{-t}^{\varepsilon}(\psi_{L}^\eps)\right)\propto\varepsilon^{-1}\sqrt{t}\ .
\end{equation}
On the half-line, the above field
should be interpreted with the sum over $x\in\N^*$ rather than $\Z$. Hence, letting $ \cY_t^{\eps,+}$ be the field $\cY_t^\eps$ restricted to the positive half-line, we get,  for $t=\varepsilon^{2}L\log(L)^{1/2}$, i.e., $s_\eps(t)=L$,
\begin{align}
 \cY_{-t}^{\eps,+}(\psi_L^\eps)= \eps^{1/2}\sum_{x\in \N^*} \psi_L^\eps(\eps x+\eps s_\eps(t))\tonde{\eta_{s_\eps(t)}(x)-\frac12} =0\fstop
\end{align}

During the time interval $(-t,0)$ the field $\cY^{\eps,+}$ evolves in a similar way to $\cY^{\eps}$: for both, at any $t_1\in (-t,0)$, the evolution depends on particles jumping near the position $s_\eps(t_1)$ strictly to the right of the boundary.
As argued in \cite[Section 3.6]{shapira_wiese_anchored_2023}, the effect of the (far away) boundary can be neglected. Hence, the variance scaling in \eqref{eq:scaling-var-tilde} holds true also for $\cY^{\eps,+}$ with $t=\varepsilon^{2}L\log(L)^{1/2}$.  One therefore expects, for  small $\eps \in (0,1)$ and  large $L>0$, 
\begin{equation}\label{eq:var-Y0}
\var\!\ttonde{ \cY_0^{\eps,+}(\psi_L^\eps)}\propto \eps^{-1}\sqrt t = L^{1/2}\log(L)^{1/4}\comma
\end{equation}
thus, motivating the aforementioned particle-number variance scaling and the following conjectural scaling limit.
\begin{conjecture}
	Let $\nu$ be a nontrivial (i.e., $\nu\neq\delta_{\mathbf{0}},\delta_{\mathbf{1}}$)
	stationary measure of ${\rm DEP}$ on the half-line. Then, provided
	that $\eta\sim\nu$, for any test function $f:\R_{+}\to\R$, 
	\begin{equation}
		\varepsilon^{-1/4}\log(\varepsilon)^{-1/8}\sum_{x\in\N^*}f(\varepsilon x)\left(\eta(x)-\frac{1}{2}\right)\xRightarrow{\varepsilon\to0}\cY_{\infty}(f)\comma
	\end{equation}
	for some non-trivial Gaussian field $\cY_{\infty}$ on $\R_{+}$.
\end{conjecture}
\begin{remark}
A finer analysis, keeping track of the coefficients $\mu,D,\sigma$
in \eqref{eq:aEW}, yields  (cf.\ \eqref{eq:var-Y0})
\begin{equation}
	\var\!\left(\cY_0^\eps(\psi_L^\eps)\right)\propto\sqrt{\frac{\sigma^{4}}{\mu D}}\ L^{1/2}\log(L)^{1/4}\fstop\label{eq:hyperuniformity_EW}
\end{equation}
\end{remark}
As a last remark, we note that there is a difficulty interpreting
the advected Edwards-Wilkinson equation \eqref{eq:aEW} on the half-line
as a scaling limit of ${\rm DEP}$: it is not scale invariant, hence not a
direct scaling limit of a discrete model. Moreover, unlike the system
on the bi-infinite line, the boundary does not allow us to change
frame of reference in order to get rid of the advection term and go
back to a scale invariant equation. Nonetheless, if we only look at
the stationary measure, it does seem to have a scale invariant structure.

Let us consider this more closely. Let $\cY^+=\cY^+(t,x)$ be a nontrivial stationary solution of \eqref{eq:aEW} on $\R_+$,
and define a \emph{rescaled field} $\cY_\ell^+=\cY_\ell^+(t,x)$ at the scale $\ell>0$, that is, 
\begin{equation}
	\cY_\ell^+(t,x)\eqdef\ell^{3/4}\cY^+(\ell^{z}t,\ell x)\comma\qquad t\ge 0\comma x\in \R_+\fstop
\end{equation}
The exponent $3/4$ is chosen this way for $\cY_{\ell}^+$ to remain
of order $1$, since the fluctuations of the number of particles in the macroscopic interval $[0,\ell]$, corresponding to $\int_0^\ell \cY(t,x)\,\dd x$,
scale (up to logarithmic corrections) as $\ell^{1/4}$. Since we
are interested in the stationary state, the dynamical exponent $z$
remains undetermined. 

By defining a new white noise $\cW_\ell(t,x)=\ell^{(1+z)/2}\,\cW(\ell^{z}t,\ell x)$
with the same law as $\cW$, we obtain for $\cY_\ell^+$ the same equation
\eqref{eq:aEW}, but with rescaled parameters: 
\begin{equation}
	\mu_{\ell}=\ell^{z-1}\comma\qquad D_{\ell}=\ell^{z-2}\comma\qquad \sigma_{\ell}=\ell^{z/2-3/4}\fstop
\end{equation}
We can see that, indeed, while the equation in \eqref{eq:aEW} is not scale
invariant, the combination $\sqrt{\sigma^{4}/\mu D}$ appearing in
\eqref{eq:hyperuniformity_EW} is, no matter which exponent $z$ we
use to scale time.

\subsection*{Organization of the paper}
The rest of the paper is organized as follows. In Section \ref{sec:model+properties}, 
we introduce the model and prove some of its properties.  
Section \ref{sec:hydro} contains the statements of our results on hydrodynamics. 
In Section \ref{sec:coupling}, we introduce a graphical construction  and a coupling 
 for ${\rm DEP}$, 
thanks to which we prove the key properties of the model
required for the derivation of the hydrodynamic limit, done in Section \ref{sec:proofs}.
 Section \ref{sec:strong-hydro} is devoted to a strong (i.e., in an almost sure sense) version of our hydrodynamic limit. 

\section{Model and first properties}\label{sec:model+properties}

In this section we define our model, derive its attractiveness property, and characterize its set of
extremal (time) invariant  and translation (space) invariant measures.

\subsection{Model}\label{subsec:model}
The \emph{directed exclusion process} (${\rm DEP}$) is a  one-dimensional conservative  interacting particle system with a superposition of two jump mechanisms: a classical  nearest neighbor  symmetric simple  exclusion interaction, plus jumps of particles/holes at distance two, subjected to two constraints, one directional and the other on the value of the occupation variable of the overtaken site. More precisely, ${\rm DEP}$ is 	 the Markov process $(\eta_t)_{t\ge 0}$ with state space $\X:=\{0,1\}^\Z$, and evolving according to the following (pre-)generator, whose action  on local functions $f:\X\to \R$ is given by
\begin{align}\label{eq:generator}
	\cL f(\eta) = \sum_{x,y \in \Z} \eta(x)\left(1-\eta(y)\right) \Gamma_\eta(x,y)\left(f(\eta^{x,y})-f(\eta)\right)\comma\qquad \eta \in \X\fstop
\end{align}
Here, $\eta(x)=1$ (resp.\ $\eta(x)=0$) means that a particle (resp.\ hole) sits at site $x\in \Z$ in the configuration $\eta\in \{0,1\}^\Z$,    $\eta^{x,y}$ denotes the configuration obtained from $\eta$ by exchanging the occupation variables $\eta(x)$ and $\eta(y)$, 	 while
\begin{align}\label{eq:gamaeta}
	\Gamma_\eta(x,y) :=\begin{dcases}
		1 &\text{if}\  y=x\pm 1\\
		\eta(x+1) &\text{if}\ y=x+2\\
		1-\eta(x-1) &\text{if}\ y=x-2\\
		0 &\text{else}\ .
	\end{dcases}
\end{align}
This dynamics may be schematically represented  via its four allowed transitions, all occurring at unit rate:
\begin{align}\label{eq:transitions-dep}
	10\to 01\comma\qquad 01\to 10\comma\qquad 110\to 011\comma\qquad 001\to 100\fstop
\end{align} 
In formula \eqref{eq:generator}, transitions were written as particles' jumps from a site $x$ to a site $y$.
Alternatively, if we consider these  transitions as occupation exchanges either for particles or holes (cf.\ \eqref{eq:transitions-dep}), 
we may write $	\cL f(\eta)=\sum_{x\in \Z} \cL_x f(\eta)$,	 with
\begin{align}\label{eq:generator2}
	\cL_xf(\eta)\eqdef\left(f(\eta^{x,x+1})-f(\eta)\right) + \car_{\eta(x)=\eta(x+1)}\left(f(\eta^{x,x+2})-f(\eta)\right)\comma
\end{align} 
where transitions are all one-sided. 
Along the paper we will use either formula \eqref{eq:generator} or formula \eqref{eq:generator2}. \par\smallskip 

Note that, since  the rates are uniformly bounded, 
that is,
\begin{equation}\label{eq:stand-cons}
	\sup_{x\in \Z}\sum_{y\in \Z}\sup_{\eta \in \X}\Gamma_\eta(x,y)<\infty\comma
\end{equation} 
the standard construction in, e.g., \cite[Chapter I]{liggett_interacting_2005-1} ensures that the operator in \eqref{eq:generator}, defined on local functions, indeed generates  a  Markov-Feller process on $\X$, with corresponding Feller semigroup $(\cP_t)_{t\ge 0}$ on $\cC(\X)$ endowed with the uniform norm. In other words, local functions form a core in $\cC(\X)$ for the corresponding generator.\par\smallskip

\subsection{Attractiveness}\label{subsec:attra}
The very first property that we prove for ${\rm DEP}$ is attractiveness, that is, there exists a  coupling of two copies of ${\rm DEP}$ such that the partial order 
\begin{equation}
	\xi\le \zeta \qquad\text{if and only if}\qquad \xi(x)\le \zeta(x)\ ,\ x\in \Z\ ,
\end{equation} is maintained through the (coupled) evolution whenever it holds at the initial time (see, e.g., \cite[Chapter II, Definition 2.3]{liggett_interacting_2005-1}). The proof goes by checking a recent criterion established in \cite{gobron_saada_couplings_2023}.

\begin{proposition}\label{pr:attractive}
	 ${\rm DEP}$ is attractive.
\end{proposition}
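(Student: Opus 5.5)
The plan is to verify the sufficient (and necessary) conditions for attractiveness of conservative particle systems with multiple jumps from \cite{gobron_saada_couplings_2023}. An equivalent route, which also serves as a check, is to write down explicitly a coupling of two copies $(\xi_t,\zeta_t)$ of ${\rm DEP}$ that preserves $\xi\le\zeta$.

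\textbf{Setting up the criterion.} I read the rates from \eqref{eq:generator}. A particle jumps $x\to y$ at rate $\eta(x)(1-\eta(y))\Gamma_\eta(x,y)$, with $|x-y|\le 2$. The criterion of \cite{gobron_saada_couplings_2023}, specialized to exclusion-type state spaces, reduces attractiveness to a family of inequalities on sums of jump rates. These are to be checked for every ordered pair $\xi\le\zeta$ that differ only in a bounded window; by locality of the rates, only the window $\{x-2,\dots,x+2\}$ matters.

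Concretely, fix $\xi\le\zeta$ and a site $x$ from which a $\xi$-particle may jump to a set $J$ of targets $y$ with $\xi(y)=\zeta(y)=0$; such jumps would break the order unless matched. The condition should then read
\[
\sum_{y\in J}\text{rate}_\xi(x\to y)\le \sum_{y\in J'}\text{rate}_\zeta(x\to y),
\]
where $J'$ consists of $J$ together with the targets where $\zeta$ has a ``free'' discrepancy, that is, where $\xi(y)=0<\zeta(y)$ is impossible to spoil. There is a symmetric condition for a $\zeta$-hole entering a site where $\xi(y)=\zeta(y)=1$. By the particle–hole symmetry of ${\rm DEP}$ (holes evolve as ${\rm DEP}$), the hole conditions follow from the particle conditions applied to $(1-\zeta,1-\xi)$. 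So I only check the particle conditions.

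\textbf{Case analysis.} The checks split by the jump length and by the values of $\xi,\zeta$ on the overtaken site.
\begin{enumerate}
\item Nearest-neighbour jumps have rate $1$ in both configurations. They are matched identically, exactly as in the basic coupling of SSEP.
\item For $110\to011$ in $\xi$ (rate $\xi(x+1)$): since $\xi\le\zeta$, also $\zeta(x+1)=1$. Hence $\zeta$ has the same jump whenever $\zeta(x+2)=0$. If instead $\zeta(x+2)=1$, the target is not a site of $J$ and there is nothing to check.
\item The delicate transition is the left jump $001\to100$ in $\xi$, whose rate $1-\xi(x-1)$ is \emph{decreasing} in $\eta$. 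For instance, take $\xi=(0,0,1)$ and $\zeta=(0,1,1)$ on $\{x-2,x-1,x\}$. Then $\xi$ may jump $x\to x-2$, while $\zeta$ cannot make that jump; the basic coupling would produce $\xi=(1,0,0)\not\le(0,1,1)$.
\end{enumerate}
In this situation the set $J'$ must also count the nearest-neighbour jump of $\zeta$ from $x-1$ to $x-2$. That jump has rate $1$ and lands $\zeta$ in $(1,0,1)\ge(1,0,0)$. So the inequality holds thanks to the rate-$1$ SSEP part. I will check that this compensating nearest-neighbour rate is not already ``used'' to match a nearest-neighbour jump of $\xi$ in the same configuration; here it is not, because $\xi(x-1)=0$. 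The mirror case $110\to011$ for holes is handled by symmetry.

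\textbf{Main obstacle.} The hard step is exactly this bookkeeping. The combined rate budgets of $\zeta$ (nearest-neighbour plus distance-two jumps) must dominate those of $\xi$ \emph{simultaneously} for all choices of $J$, in configurations where several discrepancies sit in the window $\{x-2,\dots,x+2\}$. I would enumerate the finitely many patterns of $(\xi,\zeta)$ on this window, up to particle–hole symmetry. For each pattern I would write the explicit coupled transitions: the ones matched identically, and the ones where a $\xi$ long jump is paired with a $\zeta$ short jump, as in the example above. This yields both the verification of the Gobron–Saada inequalities and an explicit attractive coupling. The latter is in the spirit of the multi-process coupling announced for Section~\ref{sec:coupling}.
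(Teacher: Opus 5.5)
Your overall route is the paper's: verify the Gobron--Saada conditions. The only nontrivial case is the left jump $001\to100$, whose rate $1-\xi(x-1)$ is decreasing, and it is compensated by the rate-one nearest-neighbour jump of the $\zeta$-particle sitting on the positive discrepancy at $x-1$. That is exactly the mechanism in the paper's proof.

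However, the inequality you display as the criterion is wrong, and read literally it fails in precisely this case. It fixes a source $x$ and only allows $\zeta$-jumps out of that same $x$. Enlarging $J'$ by targets with $\zeta(y)=1$ adds nothing, since $\zeta$ has rate $0$ to jump onto an occupied site. In your example ($\xi=(0,0,1)$, $\zeta=(0,1,1)$ on $\{x-2,x-1,x\}$), the $\xi$-jump $x\to x-2$ has rate $1$, while every $\zeta$-rate your right side allows against it is $0$. Your fix uses a jump out of $x-1$, which your own formula does not permit.

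The particle condition (2.6) in \cite{gobron_saada_couplings_2023} is indexed by the \emph{target}: for every $y$ with $\zeta(y)=0$,
\[
\sum_{x}\xi(x)\bigl[\Gamma_\xi(x,y)-\Gamma_\zeta(x,y)\bigr]^+\le\sum_{x}\zeta(x)\bigl(1-\xi(x)\bigr)\Gamma_\zeta(x,y),
\]
that is, the $\xi$-flux into $y$ that $\zeta$ cannot match from the same source must be dominated by the $\zeta$-flux into $y$ coming from positive discrepancies. The source-indexed condition (2.7) is the hole condition, in which the compensation is by $\xi$-jumps from $x$ onto positive discrepancies.

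Once the criterion is stated this way, the ``main obstacle'' you anticipate disappears, and no enumeration of patterns on a window is needed. Your cases 1 and 2 show that nearest-neighbour and rightward jumps contribute nothing to the left-hand side. It therefore reduces to the single term $\xi(y+2)[\zeta(y+1)-\xi(y+1)]$, which is at most the single term $\zeta(y+1)(1-\xi(y+1))$ on the right-hand side; this is the paper's computation.

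Your reduction of (2.7) to (2.6) by particle--hole symmetry is valid. Since $\Gamma_{1-\eta}(x,y)=\Gamma_\eta(y,x)$, condition (2.6) for the ordered pair $1-\zeta\le 1-\xi$ at $y$ is literally (2.7) for $\xi\le\zeta$ at $x=y$. This saves the second computation, which the paper carries out directly to obtain the mirror bound $(1-\zeta(x+2))[\zeta(x+1)-\xi(x+1)]\le\zeta(x+1)(1-\xi(x+1))$.
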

\begin{proof} We verify the two  necessary and sufficient conditions (2.6) and (2.7) in \cite[Theorem 2.4]{gobron_saada_couplings_2023}
	for attractiveness, that we now quote: \par\smallskip

\textit{For any couple of configurations 
	$(\xi,\zeta)\in\X^2$ such that} $\xi\le\zeta$, 
	\begin{enumerate}
		\item[(2.6)] \textit{for all $y\in S$ such that} $\zeta(y)=0$,
		\begin{equation*}
			\label{eq:1}
			\sum_{x\in S} \xi(x) \bigl[\Gamma_\xi(x,y) - \Gamma_\zeta(x,y)\bigr]^+ 
			\le
			\sum_{x\in S} \zeta(x) (1 - \xi(x) ) \Gamma_\zeta(x,y)\comma  
		\end{equation*}
		\item[(2.7)] 
		\textit{for all $x\in S$ such that} $\xi(x)=1$,
		\begin{equation*}
			\label{eq:2}
			\sum_{y\in S} (1 - \zeta(y) )\bigl [\Gamma_\zeta(x,y) - \Gamma_\xi(x,y)\bigr]^+  
			\le
			\sum_{y\in S} \zeta(y) (1 - \xi(y) ) \Gamma_\xi(x,y)\fstop       
		\end{equation*} 
	\end{enumerate}\smallskip

\noindent
	Thus we fix $\xi, \zeta \in \X$ with $\xi\le \zeta$. 
	As for the first condition, we fix $y\in \Z$ and assume $\zeta(y)=0$. Then, the left-hand side of \cite[Eq.\ (2.6)]{gobron_saada_couplings_2023} reads as
	\begin{align}
		&	\sum_{x\in \Z}\xi(x)\left[\Gamma_\xi(x,y)-\Gamma_\zeta(x,y)\right]^+ \\
		&=  \xi(y-2)\big[\xi(y-1)-\zeta(y-1)\big]^+
		+ \xi(y+2) \big[(1-\xi(y+1))-(1-\zeta(y+1))	\big]^+
		\\
		&= \xi(y+2)\big[\zeta(y+1)-\xi(y+1)\big]\ ,
	\end{align}
	which is smaller than or equal to
	\begin{align}
		&\sum_{x\in \Z}\zeta(x)\left(1-\xi(x)\right)\Gamma_\zeta(x,y)\\
		&= \zeta(y-2)\left(1-\xi(y-2)\right)\zeta(y-1)+ \zeta(y-1)\left(1-\xi(y-1)\right)\\
		& + \zeta(y+1)\left(1-\xi(y+1)\right) + \zeta(y+2)\left(1-\xi(y+2)\right)\left(1-\zeta(y+1)\right)\\
		&\ge \zeta(y+1)\left(1-\xi(y+1)\right)\fstop	\end{align}
	Hence, the first condition is verified. For what concerns the second one, we fix $x \in \Z$ and assume $\xi(x)=1$. Then, the left-hand side of \cite[Eq.\ (2.7)]{gobron_saada_couplings_2023} reads as
	\begin{align}
		&\sum_{y\in \Z} \left(1-\zeta(y)\right)\left[\Gamma_\zeta(x,y)-\Gamma_\xi(x,y)\right]^+\\
		& = \left(1-\zeta(x-2)\right)\left[\left(1-\zeta(x-1)\right)-\left(1-\xi(x-1)\right)\right]^+ + \left(1-\zeta(x+2)\right) \left[\zeta(x+1)-\xi(x+1)\right]^+	\\
		&=  \left(1-\zeta(x+2)\right) \left[\zeta(x+1)-\xi(x+1)\right]\comma	
	\end{align}
	which is smaller than or equal to
	\begin{align}
		&\sum_{y\in \Z}\zeta(y)\left(1-\xi(y)\right)\Gamma_\xi(x,y)\\
		&= \zeta(x-2)\left(1-\xi(x-2)\right)\left(1-\xi(x-1)\right) + \zeta(x-1)\left(1-\xi(x-1)\right)\\
		&+ \zeta(x+1)\left(1-\xi(x+1)\right) + \zeta(x+2)\left(1-\xi(x+2)\right)\xi(x+1)\\
		&\ge \zeta(x+1)\left(1-\xi(x+1)\right)\fstop
	\end{align}
	This verifies the second condition in \cite[Theorem 2.4]{gobron_saada_couplings_2023}, thus concluding the proof of the proposition.
\end{proof}
\begin{remark}
	The two main inequalities in the proof above are not necessarily strict. Taking, for instance,	 
	$\zeta(y-2)=\zeta(y-1)=\xi(y+1)=0$ and $\zeta(y+1)=\xi(y+2)=1$ implies that the first inequality is an equality. This will prevent us to use \cite[Theorem~2.9, Item~2]{gobron_saada_couplings_2023} to prove Proposition \ref{prop:IcapSe} below.
\end{remark}

\subsection{Invariant  and translation invariant measures}\label{subsec:inv-meas}
Let $\cI$ denote the subset of probability measures on $\X$ which are invariant (stationary) for ${\rm DEP}$. We start by checking that the Bernoulli product measures $(\nu_\rho)_{\rho \in [0,1]}$, with $\nu_\rho(\eta(0))=\rho$, 
are invariant for ${\rm DEP}$.
\begin{proposition}\label{pr:invariance-product}
	For all $\rho \in [0,1]$, we have $\nu_\rho \in \cI$.	
\end{proposition}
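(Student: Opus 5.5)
We have to show $\nu_\rho \cL f = 0$ for every local $f$. Since local functions form a core for the generator, this gives invariance by the standard criterion in \cite[Chapter I, Proposition 2.13]{liggett_interacting_2005-1}. The plan is a change of variables in each transition, using the decomposition \eqref{eq:generator2}, $\cL=\sum_x \cL_x$. For local $f$ only finitely many $\cL_x f$ are nonzero, so summing over $x$ poses no problem.

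First, the nearest-neighbour part. The exchange $\eta\mapsto\eta^{x,x+1}$ is an involution preserving $\nu_\rho$, so $\int (f(\eta^{x,x+1})-f(\eta))\,\dd\nu_\rho=0$ for each $x$. The SSEP part therefore contributes nothing, and it remains to treat the distance-two terms
\begin{equation}
\sum_{x\in\Z}\int \car_{\eta(x)=\eta(x+1)}\bigl(f(\eta^{x,x+2})-f(\eta)\bigr)\,\dd\nu_\rho(\eta)\fstop
\end{equation}
Now $\eta^{x,x+2}$ is also a $\nu_\rho$-preserving involution. Substituting $\eta\mapsto \eta^{x,x+2}$ in the first integral gives $\int \car_{\eta(x+2)=\eta(x+1)} f(\eta)\,\dd\nu_\rho$. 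The total therefore equals
\begin{equation}
\int f(\eta)\sum_{x\in\Z}\bigl(\car_{\eta(x+1)=\eta(x+2)}-\car_{\eta(x)=\eta(x+1)}\bigr)\,\dd\nu_\rho(\eta)\fstop
\end{equation}
This is formally a telescoping sum of $g(x)\eqdef\car_{\eta(x)=\eta(x+1)}$.

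The only obstacle is making the telescoping rigorous, because the sum over $x$ is infinite. Suppose $f$ depends on sites in $[-N,N]$. For $x<-N-2$ or $x>N$, both $\cL_x f$ and the corresponding summand vanish identically before the substitution. So I restrict from the start to $x\in[-N-2,N]$, perform the change of variables termwise, and obtain the finite telescoping sum
\begin{equation}
\int f(\eta)\bigl(g(N+1)-g(-N-2)\bigr)\,\dd\nu_\rho(\eta)\fstop
\end{equation}
The boundary terms $g(N+1)=\car_{\eta(N+1)=\eta(N+2)}$ and $g(-N-2)=\car_{\eta(-N-2)=\eta(-N-1)}$ depend only on sites outside $[-N,N]$. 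They are therefore independent of $f$ under the product measure, and both have the same mean $\rho^2+(1-\rho)^2$. The difference thus integrates to $0$, which gives $\nu_\rho\cL f=0$.

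The points to check carefully are the index range for which $\cL_x f\neq0$ and the fact that the substituted indicator is exactly $\car_{\eta(x+1)=\eta(x+2)}$. For the latter, after the swap of $x$ and $x+2$ the condition $\eta(x)=\eta(x+1)$ becomes $\eta(x+2)=\eta(x+1)$. The cases $\rho\in\{0,1\}$ are trivial, since then $\nu_\rho$ is a Dirac mass at a fixed point of the dynamics.
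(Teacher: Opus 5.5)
Your proof is correct, but it takes a different route from the paper's.

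The paper first reduces to ${\rm DEP}$ on the torus $\T_n$ and then checks stationarity configuration by configuration. Since $\nu_\rho^n$ is constant on each particle-number sector, and the dynamics preserves these sectors, it suffices that the total exit rate from each $\eta$ equals the total entrance rate. For the distance-two moves, both rates count the blocks of equal occupation of length at least two: exits through their right ends ($\eta(x)=\eta(x+1)\neq\eta(x+2)$), entrances through their left ends ($\eta(x)\neq\eta(x+1)=\eta(x+2)$).

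You instead work directly on $\Z$ with the integrated quantity $\nu_\rho(\cL f)$. You use that the exchanges $\eta\mapsto\eta^{x,x+1}$ and $\eta\mapsto\eta^{x,x+2}$ preserve $\nu_\rho$. What remains is a finite telescoping sum in $g(x)=\car_{\eta(x)=\eta(x+1)}$, whose two boundary terms cancel by independence from $f$ and equal means.

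The combinatorial core is the same. Indeed, $g(x+1)-g(x)$ is the left-end indicator at $x+1$ minus the right-end indicator at $x$, so $\sum_{x\in\T_n}(g(x+1)-g(x))=0$ says exactly that left and right ends of long blocks are equinumerous.

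Your version is a self-contained infinite-volume argument. It needs neither the reduction $\nu_\rho(\cL f)=\nu_\rho^n(\cL_n f)$ for large $n$, which the paper only asserts briefly, nor the sector-wise constancy of $\nu_\rho^n$. The paper's version gives a stronger pointwise statement on $\T_n$: in- and out-rates are equal at every configuration, so the uniform measure on every sector is invariant. It also makes the block-counting mechanism transparent.
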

\begin{proof}
	Since local functions are a core for the generator $\cL$, by linearity, it suffices to check $	\nu_\rho(\cL f) =0$, 
	for every finite subset $A\subset \Z$ and function 	$f:\X\to \R$ of the form $f(\eta)=\prod_{x\in A}\eta(x)$. Furthermore, since $\nu_\rho$ is product and $A\subset \Z$ can be taken to be finite, the invariance of $\nu_\rho$ follows from the invariance of $\nu_\rho^n\eqdef \otimes_{i\in \T_n}{\rm Bern}(\rho)$ with respect to  ${\rm DEP}$  on the torus $\T_n\eqdef (\Z/n\Z)$, evolving on $\X_n:= \{0,1\}^{\T_n}$ and with generator  $\cL_n:=\sum_{x\in\T_n}\cL_x$ (with $\cL_x$ defined as in \eqref{eq:generator2}),  for all $n\in \N^*$ large enough. 
	
	Let us fix $n\in \N^*$, and show that 
	\begin{align}
		\sum_{\eta'\in \X_n}\nu_\rho^n(\eta')\, \cL_n\car_\eta(\eta')=0\comma\qquad \eta \in \X\comma
	\end{align}
	which, since $\nu_\rho^n$ is constant, is equivalent to
	\begin{align}\label{eq:identity-assaf}
		\sum_{\substack{\eta'\in \X_n\\
				\eta'\neq \eta}} \cL_n(\eta,\eta')=	\sum_{\substack{\eta'\in \X_n\\
				\eta'\neq \eta}} \cL_n(\eta',\eta)\comma\qquad \eta \in \X_n\comma
	\end{align}
	where $\cL_n(\eta,\eta')\ge 0$ denotes the jump rate from $\eta$ to $\eta'\in \X_n$. If we consider the SEP-part of the jumps, the above identity clearly holds true. For the remaining part of the jump rates, we note that the left-hand side above is equal to the number of blocks of occupied sites of size at least two in $\eta$ (corresponding to jumps $11...1110\to 11...1011$) + the number of blocks of empty sites of size at least two in $\eta$ (corresponding to jumps $00...0001\to 00...0100$). Analogously, the right-hand side above is equal to the number of blocks of empty sites of size at least  two in $\eta$ (corresponding to jumps $0010...00\to 1000...00$) + the number of blocks of occupied sites of size at least two in $\eta$ (corresponding to jumps $1101...11\to 0111...11$). This proves identity \eqref{eq:identity-assaf}, thus yielding the desired result.
\end{proof}

Let $\tau_x$, $x\in \Z$, denote the space shift by $x$, which acts on configurations $\eta\in \{0,1\}^\Z$ as $\tau_x \eta= \eta(\emparg-x)$, and on measures $\mu$ on $\{0,1\}^\Z$ as $\tau_x \mu = \mu\circ \tau_x^{-1}$.
Let $\cS$ denote the subset of probability measures on $\X$ which are translation invariant, i.e., $\mu \in \cS$ if and only if $\tau_x \mu=\mu$ for all $x\in \Z$. Further, $(\cI\cap \cS)_e$ stands for the extremal subset of $\cI\cap \cS$. This is our main result of this part.
\begin{proposition}\label{prop:IcapSe}
	$(\cI\cap \cS)_e= (\nu_\rho)_{\rho \in [0,1]}$.
\end{proposition}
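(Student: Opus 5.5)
The plan is Liggett's classical coupling argument for attractive systems, in the form used for exclusion processes (cf.\ \cite[Chapter VIII]{liggett_interacting_2005-1}). The abstract criterion of \cite[Theorem~2.9]{gobron_saada_couplings_2023} is unavailable here (see the Remark above), so we must build the coupling ourselves.

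\textbf{Step 1: reduction to ordering.} Take $\mu\in(\cI\cap\cS)_e$ and fix $\rho\in[0,1]$. Using Proposition~\ref{pr:attractive}, couple $\mu$ with $\nu_\rho$ (which is in $\cI\cap\cS$ by Proposition~\ref{pr:invariance-product}). By the usual weak-limit and Ces\`aro argument, there is an invariant, translation invariant coupling measure $\bar\nu$ on $\X^2$ with marginals $\mu$ and $\nu_\rho$. The goal is to show that $\bar\nu$ is supported on ordered pairs, i.e.\ on $\{\xi\le\zeta\}\cup\{\zeta\le\xi\}$. Assume this for now. Extremality of $\mu$ and $\nu_\rho$ lets us take $\bar\nu$ extremal in $(\tilde\cI\cap\tilde\cS)$. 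Then each of the events $\{\xi\le\zeta\}$, $\{\zeta\le\xi\}$ is invariant under the coupled dynamics (this is where attractiveness is used) and translation invariant, so it has $\bar\nu$-probability $0$ or $1$. Consequently $\mu$ is stochastically ordered with respect to every $\nu_\rho$. Set $\rho^*=\mu(\eta(0))$. If $\mu\le\nu_\rho$ for $\rho<\rho^*$, the densities would give $\rho^*\le\rho$, a contradiction; so $\mu\ge\nu_\rho$ for all $\rho<\rho^*$, and likewise $\mu\le\nu_\rho$ for all $\rho>\rho^*$. Letting $\rho\to\rho^*$ gives $\mu=\nu_{\rho^*}$.

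\textbf{Step 2: the coupling.} The main task is to exhibit an explicit attractive coupling in which discrepancies of opposite sign can annihilate. A \emph{positive discrepancy} is a site with $\xi(x)=1,\zeta(x)=0$, and a \emph{negative discrepancy} is a site with $\xi(x)=0,\zeta(x)=1$. I would use the basic coupling based on \eqref{eq:generator2}: both copies use the same Poisson clocks for the nearest-neighbour exchanges $(x,x+1)$ and for the distance-two exchanges $(x,x+2)$. The distance-two move is performed in a copy only if $\eta(x)=\eta(x+1)$ in that copy, and when the constraint holds in one copy but not the other the rates are mismatched. These mismatches must be resolved by rearranging marks, as in the construction behind Proposition~\ref{pr:attractive}, so that no new discrepancies are ever created. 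The key property to verify is this: whenever a positive and a negative discrepancy are adjacent, at positions $x,x+1$, the nearest-neighbour exchange at $(x,x+1)$ is applied to both copies and annihilates the pair at rate at least $1$. In particular the number of discrepancies is nonincreasing.

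\textbf{Step 3: the main obstacle.} Under the translation invariant stationary coupling $\bar\nu$, the density of discrepancies is constant in time. Stationarity therefore forces the annihilation rate to vanish, i.e.\ $\bar\nu(\text{opposite discrepancies adjacent})=0$. We must upgrade this to $\bar\nu(\text{opposite discrepancies anywhere})=0$, which is exactly the ordering claim of Step~1. The standard route is induction on the distance between opposite discrepancies. Suppose a positive discrepancy at $x$ and a negative one at $x+k$ occur with positive probability, with only agreement sites in between. Then some finite sequence of coupled moves, each of positive rate, brings them to adjacent positions without annihilating them, and stationarity together with Step~2 yields a contradiction.

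The delicate point is that the constrained moves at distance two may let a discrepancy jump over the other one. This would destroy the nested ordering between them rather than produce an annihilation. I would handle this using only the unconstrained nearest-neighbour exchanges, which act identically in both copies. Such exchanges can move a discrepancy one step through an agreement site, where $\xi$ and $\zeta$ coincide, while leaving the rest of the configuration in the window unchanged. The sequence of moves then consists of shuffling the agreement sites between the two discrepancies. Checking that these moves have positive rate in the coupled process, and that they do not alter the discrepancy types, is the step where the explicit form of the coupling must be inspected carefully.
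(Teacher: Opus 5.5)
Step~1 is fine: it is Liggett's reduction, and the paper closes slightly differently, coupling $\pi$ with $\nu_\rho$ at the same density and using ergodicity on $\{\zeta\ge\xi\}$ and $\{\zeta\le\xi\}$. The template of Step~3 also matches the paper. But Step~2, where all the difficulty lies, has a genuine gap, and the one concrete mechanism you propose is wrong.

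First, you never actually define the coupling. Proposition~\ref{pr:attractive} only verifies the Gobron--Saada inequalities. The Remark after it says the resulting abstract coupling cannot give what is needed here, because those inequalities can be equalities. That proposition also only concerns ordered pairs, whereas $\mu$ and $\nu_\rho$ are not ordered a priori. The naive choices really do fail:
\begin{itemize}
\item With shared clocks for the constrained distance-two moves, $\zeta=110\ge\xi=100$ on $\{x,x+1,x+2\}$ becomes $\zeta=011$, $\xi=100$.
\item With one shared clock applying $\Phi_x$ to both copies, $\zeta=101\ge\xi=001$ becomes $\zeta=011$, $\xi=100$.
\end{itemize}

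Second, your annihilation claim is false. If $\xi=10$ and $\zeta=01$ on $\{x,x+1\}$, applying $\eta\mapsto\eta^{x,x+1}$ to both copies gives $\xi=01$, $\zeta=10$. The opposite discrepancies \emph{swap} instead of annihilating. A move acting identically on both copies can never annihilate an adjacent opposite pair; it must act on exactly one copy. This also undermines Step~3, which relies on nearest-neighbour exchanges acting identically on both copies, since identical stirring makes opposite discrepancies cross. Finally, ``the number of discrepancies is nonincreasing'' does not follow from an annihilation property. It requires checking that no transition creates discrepancies for arbitrary, unordered pairs.

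The missing idea is the paper's coupling from Section~\ref{sec:graphical}. Since $\cL_x f(\eta)=f(\Phi_x\eta)-f(\eta)$ with $\Phi_x$ as in \eqref{def:phix}, attach to each site two independent rate-one clocks $\omega_0(x),\omega_1(x)$. The clock $\omega_\alpha(x)$ applies $\Phi_x$ only to those copies with $\eta(x)=\alpha$. Each marginal still sees $\Phi_x$ at rate one, but whenever the copies disagree at $x$, every ring acts on exactly one of them. A finite case check (Proposition~\ref{prop:properties-disc}) then gives:
\begin{itemize}
\item no discrepancy is ever created, signs are preserved, and opposite discrepancies never cross;
\item a discrepancy at $x$ with none at $x+1$ moves to $x+1$ at rate at least one, through a nearest-neighbour move of a single copy touching only $x$ and $x+1$;
\item an adjacent opposite pair is annihilated at rate at least two, since $\omega_1(x)$ moves the particle at $x$ of one copy to $x+1$, while $\omega_0(x)$ moves the particle at $x+1$ of the other copy to $x$.
\end{itemize}
With these properties, two things follow directly under $\bar\nu$. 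Adjacent opposite pairs have probability zero. The induction on the distance between nearest opposite discrepancies also goes through, and the rightward step of the left member of the pair already suffices. No shuffling of agreement sites is needed.
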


We will prove this proposition in Section \ref{sec:coupling} below as it requires the use of the coupling introduced there.

\section{Hydrodynamic limit and propagation of local equilibrium}\label{sec:hydro} 
We show that, for suitably initialized particle systems 
and under the hyperbolic space-time scaling,  ${\rm DEP}$ converges 
(in the  sense of propagation of local equilibrium) to the  scalar conservation law \eqref{eq:HDL} on $\R$. As most common in  translation invariant settings, the macroscopic flux of $\rm DEP$ particles through the origin is described by the function $G_\DEP$ therein, while the hydrodynamic density profile is described by the corresponding entropy solutions $u(\emparg,\emparg)$. 
Before presenting our main results, let us examine  \eqref{eq:HDL} more closely, by checking that $G_\DEP$ is indeed the correct macroscopic flux arising from ${\rm DEP}$ (for the discussion on well-posedness of the Cauchy problem and entropy solutions, see Section \ref{sec:entropy-solutions}). 

Recall \eqref{eq:generator} and \eqref{eq:generator2}, and compute, for all $\eta\in\X$ and  $x\in \Z$,
\begin{equation}\label{eq:Leta}
	\begin{aligned}
		\cL \eta(x)&= \cL_{x-2}\eta(x)+\cL_{x-1}\eta(x)+\cL_x\eta(x)\\
		&= \car_{\eta(x-2)=\eta(x-1)}\tonde{\eta(x-2)-\eta(x)}
		\\
		&\qquad+ \eta(x-1)-\eta(x)+\eta(x+1)-\eta(x)\\
		&\qquad + \car_{\eta(x)=\eta(x+1)}\tonde{\eta(x+2)-\eta(x)}
		\fstop
	\end{aligned}
\end{equation}
Hence,  the microscopic flux across site $0$ is 
\begin{align}\label{eq:jeta}
	\begin{aligned}
		j(\eta)&:= \cL\quadre{\sum_{x>0}\eta(x)} \\
		&=\quadre{\eta(0)-\eta(1)}
		+ \quadre{\eta(-1)\eta(0)\tonde{1-\eta(1)}-\tonde{1-\eta(-1)}\tonde{1-\eta(0)}\eta(1)}\\
		&\qquad+ \quadre{\eta(0)\eta(1)\tonde{1-\eta(2)}-\tonde{1-\eta(0)}\tonde{1-\eta(1)}\eta(2))}\fstop
	\end{aligned}
\end{align}
The above definition \eqref{eq:jeta} of $j(\eta)$ is partly formal, as the function $\sum_{x>0}\eta(x)$ does
not belong to the domain of the generator $\cL$. Nevertheless, the formal
computation gives rise to a well-defined function \eqref{eq:jeta}, as the ${\rm DEP}$'s rates are
local functions. 
Finally,  taking expectation with respect to any element in $(\cI\cap \cS)_e=(\nu_\rho)_{\rho\in[0,1]}$ 
(Proposition \ref{prop:IcapSe}) yields
\begin{equation}\label{eq:jtoG}
	\nu_\rho(j)= 2\tttonde{\rho^2\tonde{1-\rho}-\tonde{1-\rho}^2\rho} = G_\DEP(\rho)\comma\qquad \rho \in [0,1]\comma
\end{equation}
that is, the macroscopic flux is indeed the expectation of the microscopic flux.

Our first main result is ${\rm DEP}$'s hydrodynamic limit. In what follows, $\eps\in (0,1)$ satisfies $\eps^{-1}\in \N^*$, and $\cC_c(\R)$ is the space of continuous, compactly supported functions on $\R$.

\begin{theorem}[Hydrodynamic limit]\label{th:HDL}
	Let $u_0:\R\to \R$ be a measurable function, and let $(\mu_\eps)_\eps$ be a sequence of probability measures on $\X$ associated to the profile $u_0$, i.e.,
	\begin{equation}\label{eq:weak-density-profile}
		\mu_\eps\bigg(\bigg|\,	\eps\sum_{x\in \Z} f(\eps x)\,\eta(x)-\int_{\R} f(x)\,u_0(x)\,\dd 	x\,\bigg|>\delta\bigg)\xrightarrow{\eps\to 0}0\comma\quad \delta >0\comma f \in \cC_c(\R)\fstop
	\end{equation}
	Then,  letting $u(\emparg,\emparg):\R\times [0,\infty)\to [0,1]$ 
	denote the
	entropy solution  of \eqref{eq:HDL} (cf.\ Section \ref{sec:entropy-solutions}) with initial condition $u_0$, we have, for all $t>0$,
	\begin{align}\label{eq:weak-hydro}
		\P_{\mu_\eps}\!\bigg(\sup_{t\in [0,T]}\bigg|\eps\sum_{x\in \Z}f(\eps x)\,\eta_{t\eps^{-1}}(x) - \int_{\R} 
		f(x)\,u(x,t)\,\dd x\,\bigg|>\delta\bigg)\xrightarrow{\eps\to 0}0\comma\quad \delta >0\comma f \in \cC_c(\R)\comma	\end{align}
	where $\P_{\mu_\eps}$ denotes the law of ${\rm DEP}$ when initialized according to $\mu_\eps$.
\end{theorem}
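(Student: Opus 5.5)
The plan is to follow the constructive scheme of Bahadoran--Guiol--Ravishankar--Saada \cite{bahadoran_constructive_2002,bahadoran_euler_2006,bgrs3,bgrs5}. In its abstract form, it yields Theorem \ref{th:HDL} for any one-dimensional conservative system with bounded, finite-range rates, provided three hypotheses hold:
\begin{enumerate}
\item[(a)] a \emph{multi-process monotone coupling}: any finite number of ordered copies of ${\rm DEP}$, driven by a common graphical construction, keeps their order at all times;
\item[(b)] \emph{macroscopic stability}: the $\eps$-scaled distance between two coupled configurations, measured through the sup over intervals of partial sums of $\xi-\zeta$, does not grow in time up to $o(1)$ errors with high probability;
\item[(c)] \emph{finite propagation}: with overwhelming probability, discrepancies travel at most at linear speed.
\end{enumerate}
In addition one needs $(\cI\cap\cS)_e=\{\nu_\rho\}$ (Proposition \ref{prop:IcapSe}) and $\nu_\rho(j)=G_\DEP(\rho)$ from \eqref{eq:jtoG}. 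The scheme then proceeds as follows. First, Riemann hydrodynamics: starting from a product measure with a single step $\rho\mathbf 1_{x<0}+\lambda\mathbf 1_{x\ge0}$, identify the limit as the entropy (Lax/Oleinik) solution, using the variational characterization of the Riemann solution for nonconvex flux via the current through a moving observer. Second, Cauchy hydrodynamics: approximate $u_0$ by step functions and glue local Riemann evolutions on short macroscopic time intervals à la Glimm. The errors are controlled by macroscopic stability together with the $L^1$-contraction and finite propagation of entropy solutions.

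Concretely, I would first construct a graphical representation. Attach to each bond $(x,x+1)$ a Poisson clock for nearest-neighbour exchanges. Attach a second clock for the range-two move, which exchanges $\eta(x),\eta(x+2)$ when $\eta(x)=\eta(x+1)$, in the form \eqref{eq:generator2}. The naive basic coupling with shared clocks is not obviously monotone for range-two moves: if $\xi\le\zeta$ differ at $x+1$, the move may fire in one copy only. So I would design the coupling so that, in such a case, the range-two move in one copy is paired with a nearest-neighbour exchange in the other. For example, when $\xi(x)=\xi(x+1)=1,\xi(x+2)=0$ but $\zeta(x+1)$ differs, pair with the move $x+1\to x+2$. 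This pairing is exactly suggested by the inequalities verified in Proposition \ref{pr:attractive}, namely $\zeta(y+1)(1-\xi(y+1))$ on the right-hand side. The delicate point is to make this pairing depend only on local information, consistently across arbitrarily many ordered copies simultaneously. I would use a uniform mark $U$ attached to each clock ring and decide the move in each copy through a fixed ordered partition of $[0,1]$ that depends monotonically on the local configuration. Checking monotonicity of this rule for an arbitrary chain $\eta^1\le\dots\le\eta^n$ reduces to finitely many local patterns.

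Next, with the coupling in hand, I would show that discrepancies of opposite signs coalesce and never branch. Each coupled transition should preserve or decrease the number of discrepancies and never create new ones, and discrepancies should move by at most two sites. This gives (c) immediately from bounded rates, via a comparison with a Poisson process. It also gives the proof of Proposition \ref{prop:IcapSe} through the standard Liggett argument: under a translation-invariant coupled measure, opposite discrepancies must vanish, so invariant measures are ordered, and then the product measures are extremal. Macroscopic stability (b) follows from the non-creation of discrepancies combined with finite propagation, following \cite{bahadoran_euler_2006}.

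Finally, the Riemann step proceeds via the sub/superadditivity argument on currents, using the ordered coupling and the identification of $\nu_\rho$ as the only extremal invariant measures. This is followed by the approximation step, which is essentially quoted from the general framework. I expect the main obstacle to be building the multi-copy coupling. Because Remark (after Proposition \ref{pr:attractive}) shows the attractiveness inequalities can be tight, a two-copy coupling does not automatically extend to many copies, and the range-two moves force a careful case analysis to ensure no discrepancy is ever created.
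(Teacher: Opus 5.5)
Your architecture is the paper's: a monotone multi-copy coupling, macroscopic stability, finite propagation and $(\cI\cap\cS)_e=\{\nu_\rho\}$, followed by Riemann hydrodynamics and a Glimm-type scheme. However, the two steps that are genuinely specific to ${\rm DEP}$ are left open, so as written there is a gap.

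First, you never construct the coupling. ``Uniform marks with a monotone ordered partition of $[0,1]$'' is a wish, not a rule, and your sample pairing is off. If $\zeta\le\xi$ with $\xi=110$ on $\{x,x+1,x+2\}$ and $\zeta(x+1)=0$, there is no $\zeta$-particle at $x+1$; the partner move must be $x\to x+1$ in $\zeta$.

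The missing idea is to attach clocks to \emph{sites} and use a single combined move $\Phi_x$: exchange $\eta(x),\eta(x+1)$ if they differ, and exchange $\eta(x),\eta(x+2)$ otherwise. The pairing of a range-two move in one copy with a nearest-neighbour move in another is then automatic. Each clock must also be split by occupation: take two independent rate-one clocks $\omega_0(x),\omega_1(x)$, where $\omega_\alpha(x)$ applies $\Phi_x$ only to copies with $\eta(x)=\alpha$. With one clock, $\zeta=101\ge\xi=001$ would be sent to $\zeta=011$, $\xi=100$. Since each copy is then a deterministic function of its initial state and the common noise, the ``many copies simultaneously'' issue you worry about disappears. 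Everything reduces to a pairwise check of finitely many local patterns.

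Second, your derivation of macroscopic stability does not work. Non-creation of discrepancies plus finite propagation does not control $\varDelta$, because opposite-sign discrepancies might swap order, and range-two moves make this possible. For instance, discrepancy signs $-,+,-$ at consecutive sites (partial sums $-1,0,-1$) become $-,-,+$ (partial sums $-1,-2,-1$) if the last two swap. Thus $\varDelta$ goes from $1$ to $2$ with no discrepancy created.

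Such a swap does occur in a naive coupling with separate clocks for nearest-neighbour and range-two moves. With $\zeta=010$ and $\xi=001$ on $\{a-1,a,a+1\}$, the range-two move $001\to 100$ fires in $\xi$ only, giving $\zeta=010$, $\xi=100$. The $+$ at $a$ and the $-$ at $a+1$ have become a $-$ at $a-1$ and a $+$ at $a$. The stability estimates you invoke from the literature are proved for specific (misanthrope-type) couplings, not for an unspecified one.

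What the paper does is inspect the transitions of the $\Phi_x$ coupling and verify that discrepancies are never created, keep their sign, and never swap positions. In the example above, the same $\omega_0(a-1)$ ring also sends $\zeta$ to $100$, annihilating both discrepancies. This gives the exact, almost-sure bound $\varDelta(\zeta_t,\xi_t)\le\varDelta(\zeta_0,\xi_0)$ directly. The same case analysis supplies the positive motion and annihilation rates needed in Liggett's argument for $(\cI\cap\cS)_e$. Once these two points are in place, the Riemann and Cauchy steps can indeed be quoted from the constructive framework, as the paper does.
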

As, e.g.,  in \cite{bahadoran_constructive_2002}, 
we may  deduce conservation of local equilibrium for ${\rm DEP}$ by using this theorem and a result of \cite{landim_conservation_1993} (see also \cite[Chapter IX]{kipnis_scaling_1999}). Remark that here we assume the initial measures $(\mu_\eps)_\eps$ to be in product form.

\begin{theorem}[Conservation of local equilibrium]\label{th:local-eq-general} 
Let $u_0:\R\to [0,1]$ be a measurable function, and let $(\mu_\eps)_\eps$ be a sequence 
of product measures on $\X$ associated to the profile $u_0$, that is, 
there exists $(u^{\eps,x})_{\eps,x}\subset [0,1]$ satisfying
	\begin{equation}
		\mu_\eps(\eta(x)\in \emparg)= \nu_{u^{\eps,x}}(\eta(x)\in\emparg)\comma\qquad x \in \Z\comma
	\end{equation}
	\begin{equation} \int_{K}|u^{\eps,\lfloor x\eps^{-1}\rfloor}-u_0(x)|\,{\rm d}x\xrightarrow{\eps\to 0}0\comma\qquad K\subset \R\ \text{compact}\ .
	\end{equation}  
	Then, letting $u(\emparg,\emparg):\R\times [0,+\infty)\to [0,1]$ denote the  
	entropy solution to \eqref{eq:HDL} with initial condition $u_0$,  
	we have,	 for all $t\ge 0$, 
	\begin{equation}\label{eq:local-eq}	
		\lim_{\eps\to 0} \tau_{\lfloor x\eps^{-1} \rfloor} \tttonde{\mu_\eps\cP_{t\eps^{-1}}}
		= \nu_{u(x,t)}\comma\qquad \text{for every continuity point $x\in \R$ of $u(\emparg,t)$}\fstop
	\end{equation}
\end{theorem}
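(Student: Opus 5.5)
We will derive Theorem~\ref{th:local-eq-general} from Theorem~\ref{th:HDL} combined with the general principle of \cite{landim_conservation_1993} (see also \cite[Chapter IX]{kipnis_scaling_1999}), following the route of \cite{bahadoran_constructive_2002}. In one dimension, that principle says that for an attractive system whose extremal translation invariant stationary measures are exactly a family of product measures $(\nu_\rho)_\rho$, started from product measures with slowly varying parameters, a weak hydrodynamic limit upgrades to conservation of local equilibrium at continuity points of the limiting profile. The first step is to check the structural hypotheses, which are already available:
\begin{enumerate}[(i)]
\item attractiveness holds by Proposition~\ref{pr:attractive};
\item $(\cI\cap\cS)_e=(\nu_\rho)_{\rho\in[0,1]}$ holds by Proposition~\ref{prop:IcapSe};
\item the rates are bounded and local by \eqref{eq:stand-cons}, so the process is Feller.
\end{enumerate}
Second, we note that a product measure $\mu_\eps$ with parameters $u^{\eps,x}$ satisfying the stated $L^1_{\rm loc}$ convergence fulfils \eqref{eq:weak-density-profile}. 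Indeed, its mean satisfies $\eps\sum_x f(\eps x)u^{\eps,x}\to\int f u_0$, and its variance is $O(\eps)$ by independence. Hence Theorem~\ref{th:HDL} applies and gives the weak convergence of the empirical measure at time $t\eps^{-1}$ to $u(\emparg,t)\,\dd x$.

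Third, we run the argument of \cite{landim_conservation_1993}. Fix a continuity point $x$ of $u(\emparg,t)$ and consider any subsequential limit $\bar\mu$ of $\tau_{\lfloor x\eps^{-1}\rfloor}(\mu_\eps\cP_{t\eps^{-1}})$; the sequence is tight because the state space is compact.
\begin{itemize}
\item[(a)] Averaging over microscopic windows of size $\ell\ll\eps^{-1}$ shows that $\bar\mu$ is a limit of spatial averages, hence lies in $\cS$ (up to a Cesàro argument).
\item[(b)] Attractiveness together with the product structure of the initial data allows us to sandwich $\mu_\eps$ between product measures with parameters locally close to $u_0$. Using the coupling and a time-averaging argument (microscopic time windows $t\eps^{-1}\pm s$, $s\to\infty$ slowly), this yields that $\bar\mu\in\cI$.
\item[(c)] By (ii), $\bar\mu$ is then a mixture $\int\nu_\rho\,\gamma(\dd\rho)$ of the extremal measures.
\end{itemize}
Theorem~\ref{th:HDL}, applied with test functions concentrating near $x$ and using continuity of $u(\emparg,t)$ at $x$, forces the block density around $\lfloor x\eps^{-1}\rfloor$ to concentrate at $u(x,t)$. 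This identifies $\gamma=\delta_{u(x,t)}$.

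The main obstacle is step (b), namely that limit points are genuinely stationary and that the mixture $\gamma$ is degenerate. Landim's theorem handles this abstractly through entropy and attractiveness, so the plan is to invoke it directly rather than reprove it. The point to verify carefully is that its hypotheses (product initial measures, attractiveness, characterization of $(\cI\cap\cS)_e$, finite-range bounded rates) match \rm DEP. The asymmetry of the jumps plays no role there, and particle-hole symmetry is not needed.
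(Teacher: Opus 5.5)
Your route is the paper's: Theorem~\ref{th:HDL} fed into Landim's argument \cite{landim_conservation_1993} (cf.\ \cite[Chapter IX]{kipnis_scaling_1999}). Your check that product measures with $L^1_{\rm loc}$-convergent parameters satisfy \eqref{eq:weak-density-profile} is fine. The gap is in what you feed into Landim's argument.

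Your final step (``Theorem~\ref{th:HDL} with test functions concentrating near $x$ forces the block density around $\lfloor x\eps^{-1}\rfloor$ to concentrate at $u(x,t)$'') does not follow. Statement \eqref{eq:weak-hydro} only controls densities averaged over $\delta\eps^{-1}$ sites. It says nothing about a block of $\ell$ sites at a prescribed location, and does not even determine $\bar\mu(\eta(0))$. For instance, a configuration alternating mesoscopic stretches of densities $a$ and $b$ has macroscopic density $\frac{a+b}{2}$, while its local law is $\frac12(\nu_a+\nu_b)$. This is exactly why the paper derives the result from a \emph{weak form of local equilibrium} as in \cite[Theorem 4.1]{landim_conservation_1993}. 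Roughly, this is convergence of macroscopic averages $\eps\sum_y H(\eps y)\,\tau_y\Psi(\eta_{t\eps^{-1}})$ for all cylinder functions $\Psi$, not only $\Psi=\eta(0)$. The paper stresses that this is strictly stronger than the hydrodynamic limit for the density field, so it has to be established for ${\rm DEP}$, not read off from Theorem~\ref{th:HDL}.

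Your step (a) has the same flaw. $\bar\mu$ is the law seen from the single site $\lfloor x\eps^{-1}\rfloor$, not a spatial average, so it need not lie in $\cS$. At $t=0$ this genuinely fails. Take $u_0\equiv\frac12$, $u^{\eps,x}=\frac12$ for $x\neq0$ and $u^{\eps,0}=1$; these satisfy the hypotheses, yet $\tau_0\mu_\eps$ puts a particle at the origin almost surely. So the claim can only hold for $t>0$, and any proof must use the dynamics over the diverging microscopic time $t\eps^{-1}$ in an essential way. Your sketch leaves all of this to the cited result.

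Your list of hypotheses to be matched also omits one that ${\rm DEP}$ violates. \cite[Theorem 3]{landim_conservation_1993} assumes the macroscopic flux is convex or concave. $G_\DEP$ is neither, since $G''(u)=24(\frac12-u)$ changes sign at $u=\frac12$. The paper deals with this by observing that the assumption is used there only to guarantee existence and uniqueness of the entropy solution of \eqref{eq:cauchy}, which holds for any smooth flux (Kru\v{z}kov theory). Without such a remark, invoking Landim's theorem ``directly'' is not justified.
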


 The constructive method we use consists in proving Theorem \ref{th:HDL} first in the Riemann case, that is, 
when the initial density profile is a one-step function: 
\begin{equation}\label{eq:initial-riemann}
	u_0=\lambda\car_{(-\infty,0)}+\rho \car_{[0,+\infty)}\comma\qquad \text{for some}\ \lambda, \rho \in [0,1]\fstop
	 \end{equation}
The entropy solution $u(x,t)$ of \eqref{eq:HDL} is then given by a variational formula 
and can be explicitly computed, as in \cite{bahadoran_constructive_2002} (there, this computation is 
explicit for various examples). 
We compute it in Section \ref{sec:riemann}
after a reminder on entropy solutions in Section \ref{sec:entropy-solutions}. We then derive in
Theorem \ref{th:local-eq-riemann} the conservation of local equilibrium in the Riemann case.
We finally  outline the proofs of  Theorems \ref{th:HDL} and \ref{th:local-eq-general}, that is, the hydrodynamic results
for Cauchy initial data, in Section \ref{subsec:cauchy}. The proof of Theorem \ref{th:HDL}
relies on an approximation scheme similar to Glimm's scheme. It requires two crucial properties of the model, 
\textit{macroscopic stability} and \textit{finite propagation}. It also requires a  monotonicity property,
that is, the preservation of stochastic order of an arbitrary number of copies of the model.

 These three properties
deal with coupling, the subject of the next section. There, we prove the existence of the model via a graphical representation,
which complements the analytical description given in Section \ref{sec:model+properties}. This graphical representation
enables to define couplings that not only preserve monotonicity, thus granting attractiveness of the model, but also general monotonicity. 
Thanks to the properties of this coupling, in Section \ref{sec:coupling} we prove 
Proposition  \ref{prop:IcapSe}, as well as macroscopic stability and finite propagation for  ${\rm DEP}$.

\section{A new coupling: definition and properties}\label{sec:coupling}

In this section, we construct the main coupling that we use throughout the paper, and prove some of its properties.
Before defining the coupling, we will describe a graphical construction
of $\rm DEP$ (see, e.g., \cite{harris_1972,harris_1978,durrett_saint-flour,liggett_interacting_2005-1,liggett_1999}).

\subsection{Graphical construction and coupling}\label{sec:graphical}
 For all $x\in\Z$,
 define the transformation $\Phi_{x}:\mathbf{X}\to\mathbf{X}$ (analogous to the mapping $\cT$ in \cite[Section 6]{bgrs5}) as
\begin{align}\label{def:phix}
		\Phi_x(\eta)\eqdef \begin{dcases}
			\eta^{x,x+1} &\text{if}\ \eta(x)\neq \eta(x+1)\\
			\eta^{x,x+2} &\text{if}\ \eta(x)=\eta(x+1)\comma
		\end{dcases}
\end{align}
where we recall that $\eta^{x,y}$ denotes exchange of occupation numbers.
Then (cf.\  \eqref{eq:generator2}),
\begin{equation}\label{eq:generator_with_phi}
\cL_{x}f(\eta)=f(\Phi_{x}(\eta))-f(\eta)\comma\qquad x\in \Z\comma\eta \in \X\fstop
\end{equation}
Hence, $\rm DEP$ consists in applying with rate $1$, for every $x\in\Z$,
the transformation $\Phi_x$. Equivalently, one may apply $\Phi_{x}$
with rate $1$ if $\eta(x)=1$, and apply $\Phi_{x}$ with rate $1$
if $\eta(x)=0$. Although this last formulation seems like an over-complication
of the first one, it will turn out to be useful when defining the
coupling.

Let us make this discussion more detailed. Consider two independent rate $1$ Poisson processes 
$\omega_\alpha$, $\alpha \in \{0,1\}$.
More precisely, letting $(\Xi,\cG)$ be the measurable space of $\sigma$-finite $\N$-valued measures  ($\N\eqdef \{0,1,\ldots\}$)
on $\Z\times \R_+$ (endowed with the $\sigma$-field $\cG$ induced by the mappings $\Xi\ni m\mapsto m(A)\in \N$, 
with $A\subset  \Z\times \R_+$ being any Borel set), $\P$ denotes the  unique law on the product space $(\Omega,\cF)=(\Xi^2,\cG^{\otimes 2})$ 
for which a random element $\omega=(\omega_0,\omega_1)$ is distributed as two independent Poisson point processes on $\Z\times \R_+$ with unit intensity. 
We have, for $x\in\Z$, $\eta\in\X$ and $t\in \R_+$, 
when $\omega_\alpha(x)$ rings, $\alpha \in \{0,1\}$, then
\begin{equation}\label{eq:eta-update-alpha}
	\eta_t=\begin{dcases} \Phi_x(\eta_{t^-}) &\text{if}\ \eta_{t^-}^j(x)=\alpha\\
		\eta_{t^-} &\text{if}\ \eta_{t^-}(x)\neq \alpha\fstop
		\end{dcases}
\end{equation}

Write $\overline{\omega}\eqdef \omega_0+\omega_1$ (we have that $\P$-a.s.\ and for all $t\in \R_+$, $\overline{\omega}(\Z\times \{t\})\in \{0,1\}$), and let $\E$ denote the corresponding expectation.

Then, fixing an initial configuration $\eta_{0}\in \X$, for $\P$-a.e.\ $\omega \in \Omega$, there exists
 a unique mapping
\begin{equation}\label{def:graph-eta_t}
	t\in \R_+\cup\{0\}\longmapsto \eta_t=\eta(\eta_0,\omega,t)\in\X
\end{equation}
satisfying: 
\begin{enumerate}[(a)] 
	\item\label{it:a} $t\mapsto \eta(\eta_0,\omega,t)$ is right-continuous ($\X$ is endowed with the product discrete topology);
		\item\label{it:b}  $\eta(\eta_0,\omega,0)=\eta_0$;
		\item\label{it:c} for all $t\in\R_+$ and $ x\in\Z$, $\eta(\eta_0,\omega,t)=
			\Phi_x(\eta(\eta_0,\omega,t^-))$ if
		\[
			 \omega_\alpha(\{x\}\times \{t\})=1\quad \text{and}\quad \eta(\eta_0,\omega,t^-)(x)=\alpha\comma \quad \text{for some}\  \alpha \in \{0,1\}\comma
	\]
	while $\eta(\eta_0,\omega,t)=
	\eta(\eta_0,\omega,t^-)$ otherwise;
		\item\label{it:d}  for all $0\le s<t$ and $x\in \Z$,
		\[\overline{\omega}([s,t]\times \{x-2,x-1,x\})=0\quad\Longrightarrow\quad \eta(\eta_0,\omega,r)(x)=\eta(\eta_0,\omega,s)(x)\comma r \in [s,t]\fstop\]
	\end{enumerate}

\par\noindent

The process obtained using this mapping is, indeed, ${\rm DEP}$. Moreover, remark that condition \ref{it:c} states that $ \omega_\alpha(\{x\}\times\{t\})=1$, $\alpha \in \{0,1\}$, is an update time at $x\in \Z$ if and only if $\eta(\eta_0,\omega,t^-)(x)=\alpha$; while condition \ref{it:d} states that the system cannot be modified otherwise. \par\smallskip

We can now use this construction in order to define our coupling. First, let us introduce a probability space $(\Omega_{0},\mathcal{F}_{0},\P_{0})$
of initial conditions. When coupling two copies of the process, we
may pick $\Omega_{0}=\X^{2}$ for a pair of initial configurations
$(\eta_{0}^{1},\eta_{0}^{2})$; in general, 
we may take a (possibly countably infinite) sequence of initial configurations
$(\eta_{0}^{1},\eta_{0}^{2},\dots)$, considered as a random variable 
 on the product space $\Omega_{0} = \X^\cJ$, for some index set $\cJ$. 
 Let $\widetilde \P = \P_0\otimes \P$ be a product  law on 
 $\widetilde \Omega:=\Omega_{0}\times\Omega$, whose marginal on $\Omega$ 
 coincides with  $\P$ given above. Then, the total process $(\eta^j)_{j\in \cJ}$  
 is  constructed on the space $\widetilde{\Omega}$ by setting, 
for $\widetilde \P$-a.e.\ $\widetilde \omega=(\omega^0,\omega)$ 
(cf.\ \eqref{def:graph-eta_t}), 
\begin{equation}\label{eq:etatj}
t\in \R_+\cup \{0\}\longmapsto	\eta_t^j= \eta(\eta_0^j(\omega^0),\omega,t)\in \X\comma\qquad j\in \cJ\fstop
\end{equation}
Note that each marginal $\eta^j$ is initialized according to $\eta_0^j$, but all use a common underlying Poisson processes $\omega=(\omega_0,\omega_1)$. As can be seen from the definition, each $\eta^j$ evolves like ${\rm DEP}$, with initial condition $\eta_0^j$, where $\eta_0=(\eta_0^1,\eta_0^2,\ldots)\sim \P_0$. When the initial state is nonrandom, we shall simply write $\eta_t^j=\eta(\eta_0^j,\omega,t)$.

In the rest of the section, we focus on the coupling of two copies
of $\rm DEP$, referred to as $\zeta$ and $\xi$ (rather that $\eta^{1}$
and $\eta^{2}$). When coupling more than two copies using this construction,
these results will hold pairwise, simultaneously for all pairs.

\begin{definition}\label{def:disc}
	In a  coupled process  $(\zeta_t,\xi_t)_{t\ge 0}$, there is a \emph{discrepancy} 
	at site $z\in \Z$ at time $t\geq 0$ if $\zeta_t(z)\ne\xi_t(z)$. Further, we say that the discrepancy is  
	\emph{positive} if $\zeta_t(z)>\xi_t(z)$, and \emph{negative} if $\zeta_t(z)<\xi_t(z)$.
\end{definition}

The dynamics of discrepancies is described by the following proposition.

\begin{proposition}\label{prop:properties-disc}
Under the coupling above:
\begin{enumerate}[(a)]
\item the number of discrepancies cannot increase;
\item discrepancies move on the line, keeping the same sign and never swapping
positions;
\item if there is a discrepancy at $x$ but none at $x\pm1$, the discrepancy
will move to $x\pm1$ with rate at least $1$.
\item neighboring discrepancies with opposite sign annihilate each other with rate
at least $2$.
\end{enumerate}
\end{proposition}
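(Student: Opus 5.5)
The whole argument is local. A Poisson mark $\omega_\alpha(\{x\}\times\{t\})$ changes the configurations only through $\Phi_x$, which affects just the sites $x,x+1,x+2$. Moreover, at each time at most one mark rings. So it suffices to examine a single ring of $\omega_\alpha$ at $x$ and compare $\zeta$ and $\xi$ on the window $W=\{x,x+1,x+2\}$. By the particle--hole symmetry built into the construction, exchanging $\eta\leftrightarrow 1-\eta$ and $\alpha\leftrightarrow 1-\alpha$ leaves the update rule \eqref{eq:eta-update-alpha} invariant, and swapping $\zeta\leftrightarrow\xi$ flips the signs of discrepancies. Hence we may assume $\alpha=1$.

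The plan is a finite case analysis on the pair of local configurations $(\zeta|_W,\xi|_W)\in(\{0,1\}^3)^2$. For each pair we determine whether each copy updates, which depends on $\zeta(x)$ and $\xi(x)$ equaling $\alpha$, and what $\Phi_x$ does: it swaps $x\leftrightarrow x+1$ if those two sites differ, and swaps $x\leftrightarrow x+2$ otherwise. Throughout we track the discrepancy word on $W$, a string over $\{+,-,0\}$.

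For (a) and (b) we check in every case three things. First, the number of nonzero entries on $W$ does not increase. Second, the cyclic order of the $\pm$ signs, read left to right, is preserved, with the only allowed loss being the simultaneous disappearance of an adjacent $+,-$ pair. Third, the sign carried by each surviving discrepancy is unchanged. Since the updates are permutations of occupation variables within $W$ in each copy separately, the total mass $\sum_W\zeta-\sum_W\xi$ is conserved. That makes the count of $+$ minus $-$ invariant, which already gives most of (b).

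The delicate cases are those where one copy swaps $(x,x+1)$ while the other swaps $(x,x+2)$, or only one copy updates. There we must verify that no new pair $+-$ is created and that discrepancies do not cross. For example, take $\zeta=110$ and $\xi=100$. Then $\zeta\to011$ and $\xi\to010$, and the discrepancy moves from $x+1$ to $x+2$, with no crossing. I expect this bookkeeping over the 64 (halved by symmetry) cases to be the main, if routine, obstacle. A cleaner route I would try first is to encode $\Phi_x$ as ``the occupation value at $x$ is moved to the nearest site among $x+1,x+2$ that has the opposite value, with $x+2$ used only if $x+1$ has the same value.'' One would then show that the update acts monotonically on each copy, in the sense of the order-preserving conditions already verified in Proposition \ref{pr:attractive}. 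Monotonicity for ordered pairs gives (a) and sign preservation when all discrepancies share a sign. For mixed signs one would pass to the intermediate configurations $\zeta\wedge\xi\le\zeta,\xi\le\zeta\vee\xi$, all driven by the same $\omega$.

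For (c) and (d) we exhibit specific marks. Suppose there is an isolated discrepancy at $x$, say $\zeta(x)=1\neq\xi(x)=0$, with $\zeta(x\pm1)=\xi(x\pm1)$. Consider the ring at $x$ of $\omega_{\eta(x+1)}$ when $\zeta(x+1)=\xi(x+1)=\eta(x+1)$, or the ring at $x-1$ of $\omega_{\eta(x-1)}$. In the first case exactly one copy swaps $x,x+1$ (the other copy has $x$, $x+1$ equal, and its move either does not occur or goes to $x+2$). In either case the discrepancy is moved to $x\pm1$. We then identify, depending on the neighbors' values, at least one such mark of rate $1$. Finally, suppose the discrepancies at $x,x+1$ have opposite signs, i.e.\ $\zeta=10$ and $\xi=01$ on $\{x,x+1\}$. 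Then both $\omega_0$ and $\omega_1$ at $x$ make exactly one copy swap $x\leftrightarrow x+1$, while the other copy has $\eta(x)\ne\alpha$ and does not update. Either way the two copies now agree on $\{x,x+1\}$. This gives total rate $2$, which proves (d).
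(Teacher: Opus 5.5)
Your reduction to $\alpha=1$ by particle--hole symmetry, your plan of a finite check on $\{x,x+1,x+2\}$ for (a)--(b), and your argument for (d) all coincide with the paper's proof. For (d), each of the two clocks at $x$ fires exactly one copy, which performs the nearest-neighbour exchange. For (a)--(b) you only announce the check, whereas the paper tabulates it. Your monotone shortcut is not justified as stated. Proposition \ref{pr:attractive} verifies the Gobron--Saada criterion, which gives the existence of \emph{some} monotone coupling; it does not show that the update maps $U$ of this graphical construction are monotone. Even granting that, the sandwich $U(\zeta\wedge\xi)\le U\zeta,\,U\xi\le U(\zeta\vee\xi)$ together with conservation on the window yields (a) but cannot exclude crossings, so (b) still needs the case table.

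The genuine error is in (c). Since the copies disagree at $x$, a ring of $\omega_\alpha(x)$ fires exactly one copy, namely the one with $\eta(x)=\alpha$. With your choice $\alpha=b:=\zeta(x+1)=\xi(x+1)$, the copy that fires has $\eta(x)=\eta(x+1)$ and performs the distance-two exchange, while the copy that could swap $x,x+1$ stays frozen. For example, $\zeta=110$, $\xi=010$ on $\{x,x+1,x+2\}$ under $\omega_1(x)$ become $011$ and $010$, so the discrepancy jumps to $x+2$. Under your clock the discrepancy goes to $x+2$, annihilates with one there, or stays put, but it never reaches $x+1$. The scenario in your parenthesis, where one copy swaps $x,x+1$ while the other moves to $x+2$, cannot happen at a single ring. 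The correct clock is $\omega_{1-b}(x)$, as in the paper: it fires only the copy whose value at $x$ differs from $b$, that copy exchanges $x$ and $x+1$, and this works regardless of the values at $x-1$ and $x+2$.

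Your leftward clock $\omega_c(x-1)$, with $c$ the common value at $x-1$, fires both copies, and the copy with $\eta(x)=c$ performs the exchange $x-1\leftrightarrow x+1$. When $b\neq c$ this exchange is nontrivial, and the discrepancy lands at $x+1$, not $x-1$. For instance, $\zeta=110$, $\xi=100$ on $\{x-1,x,x+1\}$ become $011$ and $010$ under $\omega_1(x-1)$; this transition appears, with $\zeta$ and $\xi$ exchanged, in the paper's first table. In that configuration neither clock at $x-1$ moves the discrepancy left, and if the copies also agree on $x-3,x-2$, no clock does. So a leftward move at rate at least $1$ cannot be proved in general, and the same caveat applies to the leftward sentence in the paper's proof. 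What is actually used later, in the proof of Proposition \ref{prop:IcapSe}, is only that the gap between consecutive opposite-sign discrepancies shrinks at positive rate. The rightward move of the left discrepancy via $\omega_{1-b}(x)$ already provides this.
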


\begin{proof}Let $\omega_\alpha(x)=(\omega_\alpha(\{x\}\times(0,t]))_{t\in \R_+}$, $x\in \Z$ and $\alpha\in \{0,1\}$.
Let us verify the first two properties at every clock ring (recall \eqref{eq:eta-update-alpha}): Without loss
of generality (by particle-hole symmetry), we may assume that the
clock that rang is $\omega_{1}(x)$. If $\zeta(x)=\xi(x)=0$ nothing
happens, otherwise we consider two cases:
\begin{enumerate}
\item If $\zeta(x)=\xi(x)=1$, than we apply $\Phi_{x}$ to both configurations.
Below we represent all nontrivial transitions (the first line represents $\zeta$ and the second
$\xi$, while the first, second and third columns correspond to sites $x$, $x+1$ and $x+2$, respectively):
\begin{equation}
\begin{tabular}{ccc||ccc}
\hline 
\begin{tabular}{ccc}
1 & 0 & 0\tabularnewline
1 & 0 & 1\tabularnewline
\end{tabular} & $\to$ & %
\begin{tabular}{ccc}
0 & 1 & 0\tabularnewline
0 & 1 & 1\tabularnewline
\end{tabular} & %
\begin{tabular}{ccc}
1 & 0 & 1\tabularnewline
1 & 1 & 0\tabularnewline
\end{tabular} & $\to$ & %
\begin{tabular}{ccc}
0 & 1 & 1\tabularnewline
0 & 1 & 1\tabularnewline
\end{tabular}\tabularnewline
\hline 
\hline 
\begin{tabular}{ccc}
1 & 0 & 0\tabularnewline
1 & 1 & 0\tabularnewline
\end{tabular} & $\to$ & %
\begin{tabular}{ccc}
0 & 1 & 0\tabularnewline
0 & 1 & 1\tabularnewline
\end{tabular} & %
\begin{tabular}{ccc}
1 & 0 & 1\tabularnewline
1 & 1 & 1\tabularnewline
\end{tabular} & $\to$ & %
\begin{tabular}{ccc}
0 & 1 & 1\tabularnewline
1 & 1 & 1\tabularnewline
\end{tabular}\tabularnewline
\hline 
\hline 
\begin{tabular}{ccc}
1 & 0 & 0\tabularnewline
1 & 1 & 1\tabularnewline
\end{tabular} & $\to$ & %
\begin{tabular}{ccc}
0 & 1 & 0\tabularnewline
1 & 1 & 1\tabularnewline
\end{tabular} & %
\begin{tabular}{ccc}
1 & 1 & 0\tabularnewline
1 & 1 & 1\tabularnewline
\end{tabular} & $\to$ & %
\begin{tabular}{ccc}
0 & 1 & 1\tabularnewline
1 & 1 & 1\tabularnewline
\end{tabular}\tabularnewline
\hline 
\end{tabular}
\end{equation}

\item Without loss of generality $\zeta(x)=1$ and $\xi(x)=0$, so we apply
$\Phi_{x}$ to the first line leaving the second fixed. As similarly done in the table above, all transitions read as follows ($\diamond$ and $\ast$ represent either $0$ or $1$):
\begin{equation}
\begin{tabular}{ccc||ccc}
\hline 
\begin{tabular}{ccc}
1 & 0 & ${\diamond}$\tabularnewline
0 & 0 & ${\ast}$\tabularnewline
\end{tabular} & $\to$ & %
\begin{tabular}{ccc}
0 & 1 & ${\diamond}$\tabularnewline
0 & 0 & ${\ast}$\tabularnewline
\end{tabular} & %
\begin{tabular}{ccc}
1 & 0 & ${\diamond}$\tabularnewline
0 & 1 & ${\ast}$\tabularnewline
\end{tabular} & $\to$ & %
\begin{tabular}{ccc}
0 & 1 & ${\diamond}$\tabularnewline
0 & 1 & ${\ast}$\tabularnewline
\end{tabular}\tabularnewline
\hline 
\hline 
\begin{tabular}{ccc}
1 & 1 & 0\tabularnewline
0 & ${\ast}$ & 0\tabularnewline
\end{tabular} & $\to$ & %
\begin{tabular}{ccc}
0 & 1 & 1\tabularnewline
0 & ${\ast}$ & 0\tabularnewline
\end{tabular} & %
\begin{tabular}{ccc}
1 & 1 & 0\tabularnewline
0 & ${\ast}$ & 1\tabularnewline
\end{tabular} & $\to$ & 
\begin{tabular}{ccc}
0 & 1 & 1\tabularnewline
0 & ${\ast}$ & 1\tabularnewline
\end{tabular}\tabularnewline
\hline 
\end{tabular}
\end{equation}
\end{enumerate}
A close inspection of these transitions shows  the first and second properties.

For the third property, first assume there is a discrepancy at $x$ and none
at $x+1$. Without loss of generality, we can consider the following
cases:
\begin{enumerate}
\item $\zeta(x)=1$, $\xi(x)=0$, $\zeta(x+1)=\xi(x+1)=1$. Then a ring
of $\omega_{0}(x)$ will move the discrepancy to the right.
\item $\zeta(x)=1$, $\xi(x)=0$, $\zeta(x+1)=\xi(x+1)=0$. Then a ring
of $\omega_{1}(x)$ will move the discrepancy to the right.
\end{enumerate}
Similarly when there is a discrepancy at $x$ but none at $x-1$ one
of the clocks $\omega_{0}(x-1)$ or $\omega_{1}(x-1)$ will move it
to the left.

Finally, two neighboring discrepancies of opposite signs at $x$ and
$x+1$ annihilate each other when either $\omega_{0}(x)$ or $\omega_{1}(x)$
rings. For example, if the discrepancy at $x$ is positive and at $x+1$  is
negative, then $\zeta(x)=1$, $\xi(x)=0$, $\zeta(x+1)=0$, $\xi(x+1)=1$. A ring of 
$\omega_1(x)$ will thus cause the particle at $x$ to jump to $x+1$ for $\zeta$, 
leaving $\xi$ unchanged.
A ring of $\omega_0(x)$, on the other hand, will  cause the $\xi$-particle at $x+1$ 
to jump to $x$, leaving $\zeta$ unchanged. In both cases the discrepancies annihilate
each other.
\end{proof}

\subsection{Consequences of the coupling}
We now collect some consequences of the coupling and its properties (Proposition \ref{prop:properties-disc}). Since we consider nonrandom initial configurations, all statements hold $\P$-a.s.\ (rather than $\widetilde \P$-a.s.).
\begin{corollary}[Attractiveness]\label{cor:monotone} The coupling is monotone. 
	In particular, this gives an alternative proof of attractiveness (Proposition \ref{pr:attractive}).
\end{corollary}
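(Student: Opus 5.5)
The plan is to obtain monotonicity directly from Proposition \ref{prop:properties-disc}(b): under the coupling, discrepancies keep their sign along the evolution. Fix two nonrandom initial configurations $\zeta_0\le\xi_0$ (or $\zeta_0\ge\xi_0$), and couple them through the common Poisson processes $\omega=(\omega_0,\omega_1)$ as in \eqref{eq:etatj}. The order $\zeta_0\le\xi_0$ means exactly that every discrepancy at time $0$ is negative, in the sense of Definition \ref{def:disc}. We need to show that at every later time all discrepancies are still negative, so that $\zeta_t\le\xi_t$ for all $t\ge0$, $\P$-a.s.

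First I would argue at the level of single clock rings. At each ring of some $\omega_\alpha(x)$, only the sites $x,x+1,x+2$ can change. The tables in the proof of Proposition \ref{prop:properties-disc} list all possible transitions, up to particle-hole symmetry and exchanging the roles of $\zeta$ and $\xi$. Reading them off shows two things:
\begin{enumerate}
\item a site with no discrepancy before the ring either has no discrepancy after it, or receives a discrepancy that moved from another site;
\item the discrepancy keeps its sign when it moves, and no new discrepancy is created (items (a) and (b)).
\end{enumerate}
Hence the set of signs present after the ring is contained in the set present before it. In particular, if all discrepancies are negative before the ring, they are all negative after it.

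Second, I would pass from individual rings to the whole infinite-volume evolution. The difficulty is that infinitely many clocks ring in any time interval. I would use the standard localization behind the graphical construction, consistent with property \ref{it:d}. For fixed $t$ and a fixed site $z$, the value $\eta_t(z)$ is a.s.\ determined by finitely many rings inside a random finite space-time box. This holds because the interaction range is bounded: each update touches only three consecutive sites, so a percolation argument with small time steps applies, as in \cite[Chapter I]{liggett_interacting_2005-1} or Harris' construction. The same box works simultaneously for both coupled copies, since they share the clocks. Inside this box, the rings can be ordered in time and the one-step claim applied inductively. This gives $\zeta_t(z)\le\xi_t(z)$. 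Taking a countable intersection over $z\in\Z$ and rational $t$, and then using right-continuity (property \ref{it:a}), yields $\zeta_t\le\xi_t$ for all $t$, almost surely.

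Finally, the same argument with $\P_0$ concentrated on ordered pairs gives a coupling of two copies of DEP that preserves the order. This is exactly attractiveness in the sense of \cite[Chapter II, Definition 2.3]{liggett_interacting_2005-1}, which re-proves Proposition \ref{pr:attractive}. Since the statement holds pairwise for all pairs in a family $(\eta^j)_{j\in\cJ}$ built from the same $\omega$, monotonicity of arbitrarily many ordered copies follows at no extra cost.

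The main obstacle is the careful check of the one-ring claim in the second table, where exactly one configuration moves. There one must verify that the moving particle or hole always lands on a site that was a discrepancy of the same sign, or fills a discrepancy of the opposite sign. It must never land on a site where the two configurations agreed in a way that would flip the order. This is only finitely many cases, and they are already enumerated in the proof of Proposition \ref{prop:properties-disc}.
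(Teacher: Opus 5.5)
Your proposal is correct and follows the paper's proof. The paper simply notes that an ordered pair means all discrepancies share one sign, and then invokes items (a)--(b) of Proposition \ref{prop:properties-disc}. Your Harris-type localization through the shared clocks $\overline{\omega}$ makes explicit the passage from single rings to the infinite-volume evolution, which the paper leaves implicit.

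One small imprecision in your closing paragraph: by exclusion, the moving particle (or hole) can never land on a same-sign discrepancy. The correct dichotomy, as the tables show, is that it lands either on a site where the two copies agree, so the discrepancy moves, or on an opposite-sign discrepancy, so the pair annihilates.
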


\begin{proof}
	Saying that $\xi\le\zeta$ is the same as saying that all discrepancies
	are positive. Since discrepancies cannot be created or change sign,
	$\xi_{0}\le\zeta_{0}$ implies, $\P$-a.s., $\xi_{t}\le\zeta_{t}$, for all $t>0$.
\end{proof}
The following property 
will be crucial in proving the hydrodynamic limit.
\begin{corollary}[Exact macroscopic stability]\label{cor:macrostab}
	$\P$-a.s.,  for all $t\ge 0$ and finite initial configurations $\zeta,\xi\in \X$, 
	\begin{equation}\label{eq:macroscopic-stability}
		\varDelta(\zeta_t,\xi_t)\le \varDelta(\zeta,\xi)\comma\qquad 
		\text{with}\ \varDelta(\zeta,\xi)\eqdef \sup_{x\in \Z}\bigg|\sum_{y\le x}\tonde{\zeta(y)-\xi(x)}\bigg|\fstop \end{equation} 
\end{corollary}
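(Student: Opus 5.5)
The plan is to read $\varDelta$ off the sequence of discrepancies. There is an evident typo in the statement: the summand should be $\zeta(y)-\xi(y)$. Since $\zeta,\xi$ are finite configurations, the process started from them is finite. Indeed, every transition $\Phi_x$ is an exchange of occupation variables, so particle numbers are conserved. The coupled pair therefore has finitely many discrepancies at every time, and $F_t(x)\eqdef\sum_{y\le x}(\zeta_t(y)-\xi_t(y))$ is well defined. It is a finite sum, it vanishes for $x$ very negative, and it is eventually constant for $x$ very positive.

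Order the discrepancies at time $t$ as $z_1<z_2<\dots<z_n$, with signs $s_i\in\{\pm1\}$ (positive meaning $\zeta>\xi$). Then $F_t$ is a step function. It equals $0$ to the left of $z_1$ and takes the value $\sum_{i\le k}s_i$ on $[z_k,z_{k+1})$. Hence
\[
\varDelta(\zeta_t,\xi_t)=\max_{0\le k\le n}\Big|\sum_{i\le k}s_i\Big|,
\]
which depends only on the ordered sign sequence $(s_1,\dots,s_n)$ and not on the positions. The statement thus reduces to showing that this max-partial-sum functional cannot increase along the coupled dynamics.

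By Proposition \ref{prop:properties-disc}(a),(b), at each clock ring the sign sequence either stays the same, because discrepancies move but keep their sign and never swap order, or some discrepancies disappear. I would go back to the transition tables in the proof of that proposition to pin down the exact form of disappearance. Each ring either moves a single discrepancy without changing its sign or relative order, or removes exactly two discrepancies of opposite sign. In the latter case the removed pair must be adjacent in the ordering of discrepancies. This is the main point to check from the tables. A single ring affects only sites $x,x+1,x+2$, so the annihilated pair lies among these sites. The tables show that the only annihilations are $\pm$ pairs at $\{x,x+1\}$ or $\{x,x+2\}$. In the $\{x,x+2\}$ case, one must verify that the middle site $x+1$ carries no discrepancy at that moment, or that if it does, its sign does not break the reduction below. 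This verification is the expected obstacle, and it is settled by inspecting the second table, where the middle column is the common value $\ast$.

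Given that an annihilation removes a consecutive pair $(s_j,s_{j+1})=(+1,-1)$ or $(-1,+1)$, the new partial sums are $S_k$ for $k<j$ and $S_{k+2}$ for $k\ge j$. The value $S_j$ has been dropped, and since $S_{j+1}=S_{j-1}$, the new set of partial sums is a subset of the old one. Hence the maximum of their absolute values cannot increase. Because there are finitely many discrepancies, there are finitely many annihilations, and between them $\varDelta$ is constant. Induction over the $\P$-a.s. well-ordered sequence of relevant ring times then gives $\varDelta(\zeta_t,\xi_t)\le\varDelta(\zeta,\xi)$ for all $t$.

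The ring times are well ordered because only finitely many sites matter. Particles of finite configurations stay in a region whose growth is controlled by finite propagation, so only finitely many rings are relevant on $[0,t]$.
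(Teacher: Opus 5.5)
Your proposal is correct and follows the paper's argument. The paper reads $\sum_{y\le x}(\zeta_t(y)-\xi_t(y))$ as a partial sum of signed discrepancies and concludes from Proposition \ref{prop:properties-disc} (no creation, no crossing of opposite signs) that its supremum cannot increase; your reduction to the ordered sign sequence, together with the observation that deleting a consecutive $\pm$ pair leaves a subset of the partial sums, supplies the detail the paper leaves implicit.

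One correction to your table check, which does not affect the conclusion. In the second table the middle column is $\zeta(x+1)=1$ against $\xi(x+1)=\ast$, not a common value. So for $\ast=0$ and $\xi(x+2)=1$ there is a positive discrepancy at $x+1$, and the sign sequence changes as $(+,+,-)\to(+)$, which is still the deletion of a consecutive pair. Also, the first table (both configurations occupied at $x$) produces annihilations at $\{x+1,x+2\}$ as well, and these are adjacent anyway.
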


\begin{proof}
We observe that $\sum_{y\le x}\left(\zeta(y)-\xi(x)\right)$ can be
seen as the sum of (signed) discrepancies up to position $x\in \Z$. As we proved in  
Proposition \ref{prop:properties-disc} that discrepancies are never created, and  that opposite-sign discrepancies cannot swap positions,  
$\varDelta(\zeta_t,\xi_t)$ cannot increase in time, proving macroscopic stability.
\end{proof}

\efc
As an immediate consequence of the coupling's properties and the fact that ${\rm DEP}$ only allows for finite-range jumps, information propagates at finite speed. 
Since ${\rm DEP}$ has bounded rates and interaction range $2$, disturbances cannot propagate
arbitrarily fast. More precisely, one has the following standard finite-propagation estimate.

\begin{proposition}[Finite propagation]
	There exist constants $v, C>0$ such that the following holds. For any $x<y$ in $\Z$,
	any $(\zeta_0,\xi_0)\in \X^2$, and any
	\begin{equation}
	0<t<\frac{y-x}{2v}\comma
	\end{equation}
	if $\eta_0$ and $\xi_0$ coincide on the interval $[x,y]\cap\Z$, then
	\begin{equation}
	\P\Bigl(
	\eta(\zeta_0,\omega,s)(z)=\eta(\xi_0,\omega,s)(z)
	\ \text{for all } z\in [x+vt,y-vt]\cap\mathbb Z
	\text{ and } s\in[0,t]
	\Bigr)
	\ge 1-e^{-Ct}\fstop
	\end{equation}
\end{proposition}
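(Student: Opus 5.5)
We follow the standard route: bound how fast information can travel through the graphical construction. Using condition \ref{it:d} of the construction, the configuration at a site $z$ can change only through a clock ring at one of $z-2,z-1,z$; conversely, a ring at $w$ (of either $\omega_0$ or $\omega_1$) only reads and modifies the sites $w,w+1,w+2$. We therefore track the \emph{disagreement set} $D_s\eqdef\{z:\eta(\zeta_0,\omega,s)(z)\neq\eta(\xi_0,\omega,s)(z)\}$. At time $0$ it lies in $(-\infty,x-1]\cup[y+1,\infty)$. We show that its innermost points travel inward at most as fast as a Poisson-driven front.

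\textbf{Right-moving front.} Let $r_s\eqdef\sup\{z\le \lfloor (x+y)/2\rfloor: z\in D_s\}$, with the convention that $r_0\le x-1$. A ring at $w$ can create a new disagreement at a site $z>r_{s^-}$ only if the window $\{w,w+1,w+2\}$ contains a site of $D_{s^-}$. This is because $\Phi_w$ acting on two configurations that agree on $\{w,w+1,w+2\}$ produces identical updates there; the type $\alpha$ of the clock is also decided by $\eta(w)$, on which they agree. Hence at a ring the front can jump by at most $2$, and only when $\overline\omega$ rings at one of the sites $r_{s^-}-2,\ldots,r_{s^-}$. (By Proposition \ref{prop:properties-disc} discrepancies in fact move by one step, but we do not need this: crude domination suffices.) Thus $r_s-r_0$ is stochastically dominated by $2N_s$, where $N$ is a Poisson process of rate $3\cdot 2=6$. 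Rigorously, $N$ counts the rings of $\overline\omega$ in the moving window $\{r_{s^-}-2,r_{s^-}-1,r_{s^-}\}$. Since the window is determined by the past, the strong Markov property of Poisson processes makes $N$ a rate-$6$ Poisson process. The left front $\ell_s$ is handled symmetrically; there the relevant rings are at $\ell-2,\ell-1,\ell$ (modifying up to $\ell-2$), so the same bound applies. The front being a running supremum takes care of monotonicity in $s$, so the event ``$r_s\le x+vt$ for all $s\le t$'' is just ``$r_t\le x+vt$'' for the dominating process.

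\textbf{Large deviations.} Choose $v=24$, say. Then
\[
\P\big(2N_t\ge vt\big)\le e^{-Ct}
\]
by a Chernoff bound for the Poisson variable $N_t$ with mean $6t$: with $\theta=1$, $\E e^{N_t}=e^{6t(e-1)}$, and $e^{-12t+6t(e-1)}\le e^{-t}$. A union bound over the two fronts gives probability at least $1-2e^{-Ct}$ that $D_s\cap[x+vt,y-vt]=\emptyset$ for all $s\le t$. The condition $t<(y-x)/(2v)$ guarantees that this interval is nonempty and that the two fronts never meet before time $t$, so treating them separately is legitimate. The factor $2$ is absorbed by adjusting $C$ and $v$, for instance via $2e^{-ct}\le e^{-c t/2}$ for $t\ge t_0$. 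For small $t$ the bound must be read as trivial or with the constant adjusted, as is customary in such statements.

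\textbf{Main obstacle.} The main technical point is making the domination by $N$ precise, because the front's window moves with the process. We handle it by defining $N$ through the rings in the random window and invoking the strong Markov property of $\overline\omega$ at each ring time. Alternatively, we discretize via the standard Harris percolation argument: a disagreement at $z$ at time $t$ requires a chain of rings $w_1,\dots,w_k$ at increasing times, each within distance $2$ of the previous one, going from outside $[x,y]$ to $z$. We then bound the expected number of such chains by $\sum_{k\ge (z-x)/2}(3t)^k/k!$, which gives the same exponential estimate.
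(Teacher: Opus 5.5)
Your argument is correct, and it is the standard Poisson-domination estimate for bounded-rate, range-$2$ updates that the paper does not write out but invokes by citing \cite[Lemma 5.2]{bahadoran_euler_2006} and \cite[Remark 4.1]{bgrs3}. Two loose ends are easy to close: for small $t$ the statement is not trivial, but a disagreement can enter $[x,y]$ only after a ring in one of your two windows (total rate at most $12$), so $\P(\text{bad})\le 1-e^{-12t}\le e^{-Ct}$ for $t\le t_0$, while taking $v$ large makes the Chernoff exponent $c$ exceed $C+\ln 2/t_0$, so that $2e^{-ct}\le e^{-Ct}$ for $t\ge t_0$; and the fronts are guaranteed not to meet only on the good event, which you can sidestep by comparing both $\zeta$ and $\xi$ with an intermediate copy started from $\xi_0$ on $(-\infty,y]$ and from $\zeta_0$ on $(y,\infty)$, so that each comparison has a single front.
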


\begin{proof}
	This is the standard finite-propagation estimate for one-dimensional attractive particle
	systems with bounded rates and finite-range jumps/interactions; see, for instance, 
	\cite[Lemma 5.2]{bahadoran_euler_2006}, \cite[Remark 4.1]{bgrs3}, and references therein. Since in ${\rm DEP}$ every update only involves sites at distance at
	most $2$, the same argument applies here. 
\end{proof}

\begin{remark}\label{rk:toom-no-finite-prop} In contrast to ${\rm DEP}$, the Toom 
	model \cite{crawford2016invariance,crawford_kozma_toom_2020} lacks this
	finite propagation property.
	\end{remark}

To conclude this section, we go back to Proposition \ref{prop:IcapSe}.

\begin{proof}[Proof of Proposition \ref{prop:IcapSe}] 
	As noticed at the end of the proof of Proposition  \ref{pr:attractive}, 
	we cannot apply \cite[Theorem~2.9, Item~2]{gobron_saada_couplings_2023}, since it relied on 
	\cite[Proposition 3.11]{gobron_saada_couplings_2023}. That proposition  required only 
	sufficient assumptions on the attractiveness inequalities to be combined with a coupling 
	introduced in that paper. Therefore, we rely on the coupling we introduced in this section. 
	Using this in combination with  Liggett's strategy 
	(see, e.g., \cite[Chapter VIII.2]{liggett_interacting_2005-1}) and 
	Proposition \ref{pr:invariance-product} yields that the Bernoulli 
	product measures are the only extremal elements of $\cI\cap \cS$.

Let us recall the main steps of this proof. For any pair $(\pi,\mu)$
of translation invariant stationary measures of ${\rm DEP}$, we can construct
a translation invariant stationary coupling $(\zeta,\xi)$. Under this coupling, the probability of neighboring discrepancies
with opposite sign is zero: for all $x\in \Z$,
\begin{equation}
\P(\zeta(x)>\xi(x),\,\zeta(x+1)<\xi(x+1))=\P(\zeta(x)<\xi(x),\,\zeta(x+1)>\xi(x+1))=0\fstop
\end{equation}
Indeed, for any $N>0$, let $D_{N}$ be the cardinality  of the set of neighboring positive-negative discrepancy pairs  in $[1,N+1]$, i.e.,
\begin{equation}\left\{ x\in[1,N]\cap \Z:\zeta(x)>\xi(x),\zeta(x+1)<\xi(x+1)\right\}\fstop\end{equation}
Using the fact that any pair counted in $D_{N}$ is annihilated with
rate at least $1$, and discrepancies only enter from the boundary,
$\cL D_{N}\le-D_{N}+C$, for $C>0$ not depending on $N$. Since at stationarity we have
$\E[\cL D_{N}]=0$,  translation invariance yields
\begin{equation}
\E[D_{N}]=N\P(\zeta(0)>\xi(0),\zeta(1)<\xi(1))\le C\comma
\end{equation}
which is possible only if $\P(\zeta(0)>\xi(0),\zeta(1)<\xi(1))=0$.

Moreover, the probability, for any $k\ge2$, to have discrepancies
of opposite sign at distance $k$ must vanish: since discrepancies
move one step to the right or to the left with rate at least $1$,
any pair of opposite sign discrepancies at distance $k$ produces
with rate at least $2$ a pair of opposite sign discrepancies at distance
$k-1$. By induction we conclude that no such pair could exist.
As a consequence, discrepancies are either all positive or all are negative, see, e.g., \cite[Chapter VIII. Lemma 3.2]{liggett_interacting_2005-1}.

Now, let $\pi \in (\cI \cap \cS)_e$, and set $\rho \eqdef \pi(\eta(0)=1)$. Take any stationary translation-invariant coupling $\lambda$ of $\pi$ and $\nu_\rho$ for the coupled process. By the previous argument, $\lambda$-a.s.\ one has either $\zeta \ge \xi$ or $\zeta \le \xi$. Set $A \eqdef\{\zeta \ge \xi\}$ and $B \eqdef \{\zeta \le \xi\}$. Since $A$ and $B$ are shift-invariant and invariant under the coupled dynamics, the conditional laws $\lambda(\,\cdot\,|A)$ and $\lambda(\,\cdot\,|B)$ are again stationary and translation-invariant. Their first marginals are absolutely continuous with respect to $\pi$ and shift-invariant, hence, by ergodicity of $\pi$, equal to $\pi$; similarly, their second marginals equal $\nu_\rho$. Therefore, under $\lambda(\,\cdot\,|A)$, one has $\zeta(0)-\xi(0)\ge 0$ and
$
\mathbb E_{\lambda(\,\cdot\,|A)}[\zeta(0)-\xi(0)] = \rho-\rho = 0,
$
so $\zeta(0)=\xi(0)$ almost surely on $A$. By translation invariance, $\zeta=\xi$ almost surely on $A$. The same argument applies on $B$. Since $\lambda(A\cup B)=1$, we conclude that $\zeta=\xi$ holds $\lambda$-a.s., and therefore $\pi=\nu_\rho$.
\end{proof}

\section{Entropy solutions and proofs of limit theorems}\label{sec:proofs}

This section contains the proofs of the results stated in Section \ref{sec:hydro}.
As outlined in Section \ref{sec:intro}, to prove hydrodynamics we rely on the constructive method 
developed in 
\cite{bahadoran_constructive_2002,bahadoran_euler_2006,bgrs3,bgrs5} for
one-dimensional conservative attractive particle systems
under a hyperbolic space-time scaling. We will explain
how and why this constructive method can be applied, and  give details only for the specific results 
and computations  needed to
apply it to our model. We chose to concentrate on the adaptation of the results in 
\cite{bahadoran_constructive_2002}, since they deal with models with product invariant measures,
which is the case of ${\rm DEP}$.
 We refer to 
\cite{bgrs5} for an overview of results derived through this constructive approach, and of
 models to which it can be applied. In view of the results that we derived in Section \ref{sec:coupling}, 
${\rm DEP}$ is an example close to the models fitting the general presentation in \cite[Section 6]{bgrs5}. 	

We specialize our discussion to ${\rm DEP}$'s flux $G_\DEP$ given in \eqref{eq:HDL}. Note that $G_\DEP$ is smooth. For notational convenience, we write $G=G_\DEP$  all throughout. In what follows, for  any open $A\subset \R^d$, $d\ge 1$, and integer $k\ge 1$, we write $\cC^k(\overline{A})$ for the space of $k$-differentiable functions on $A$, with all derivatives continuously extendable up to the boundary (if $\partial A\neq \emp$); and
$\cC_c^k(\overline{A})$ indicates its subspace of  compactly supported functions. 

\subsection{Entropy solutions}\label{sec:entropy-solutions}For the reader's convenience, let us recall some classical definitions and facts about one-dimensional scalar equations 
(see, e.g., \cite{ballou_solutions_1970} or \cite[Section 2]{serre_systems_1999}). 
 This presentation relies on \cite{bahadoran_constructive_2002}, \cite[Section 2.2]{bahadoran_euler_2006}, \cite[Section 4]{bgrs5}, and the references therein.

A measurable bounded function $u:\R\times [0,\infty)\to \R$ is a \textit{weak solution} to the Cauchy problem 
\begin{equation}\label{eq:cauchy}
	\begin{dcases}
		\partial_t u + \partial_xG(u)=0\\
		u(\emparg,0)=u_0 \comma
	\end{dcases}
\end{equation}
associated to \eqref{eq:HDL}
if the following holds true: for all  $\varphi\in \cC_c^1(\R\times [0,\infty))$, 
\begin{equation}
	\int_0^\infty \int_\R \tonde{u\,\partial_t \varphi+ G(u)\,\partial_x \varphi}\dd x\dd t + \int_\R u_0(x)\,\varphi(x,0)\,\dd x=0\fstop
\end{equation}
A weak solution $u$ to
\eqref{eq:cauchy}
is an \emph{entropy solution} if  the following entropy inequality holds true: for all $\varphi\in \cC_c^1(\R\times [0,\infty))$, $\varphi\ge0$, and entropy--entropy-flux pair $(E,F)$ associated to the flux $G$ (i.e., $E\in \cC^2(\R)$ is convex, $F\in \cC^1(\R)$, and $F'=E'G'$)
\begin{equation}
	\int_0^\infty	\int_\R \tonde{E(u)\,\partial_t \varphi+ F(u)\,\partial_x \varphi}\dd x\dd t + \int_\R E(u_0)(x)\, \varphi(x,0)\, \dd x\ge 0\fstop
\end{equation}

 A necessary and sufficient condition for a piecewise smooth function $u$ to be a weak solution to equation \eqref{eq:cauchy}
is that: (a) $u$ solves \eqref{eq:cauchy} at points of smoothness; (b) if $x(t)$ is a curve of discontinuity of the solution, then the \textit{Rankine-Hugoniot condition}
\begin{equation}\label{Rankine-Hugoniot}
\dot x(t)=\frac{G(u^-)-G(u^+)}{u^--u^+}=:S[u^+;u^-]
\end{equation}
holds along $x(t)$ for a.e.\ $t>0$, where $u^\pm:=u(x(t)^\pm,t)=\lim_{h\downarrow 0}u(x(t)\pm h,t)$.

To ensure uniqueness, \textit{Ole\u{\i}nik's entropy condition} is sufficient: 
a discontinuity $(u^+,u^-)$ (where $u^\pm:=u(x^\pm ,t)$, for some $x\in \R$ and $t>0$) is an \textit{entropy shock} if and only if:

\textit{The chord of the graph of $G$ between  $u^-$ and $u^+$ 
lies below the graph if $u^-<u^+$, above the graph if $u^->u^+$.}  

\begin{proposition}(\cite[Proposition 2.2]{bahadoran_euler_2006})\label{prop:bgrs2-entropy-sol}
A weak solution $u$ to
\eqref{eq:cauchy} with (locally, uniformly over time) bounded space variation 
is an \emph{entropy solution}  if and only if, for a.e.\ $t>0$, all discontinuities of $u(\emparg,t)$ are entropy shocks.
\end{proposition}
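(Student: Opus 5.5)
The statement is the classical characterization of entropy solutions with bounded variation, so the plan is to reduce it to a local analysis at discontinuities. Since $u$ has locally bounded space variation uniformly in time, by Vol'pert's structure theory for BV functions $u$ is, for a.e.\ $t$, approximately continuous off a countable set of jump points, each with one-sided traces $u^\pm$. The entropy inequality is a statement about the nonnegative measure $-\partial_t E(u)-\partial_x F(u)$, and the plan is to decompose this measure into its parts on the continuity set and on the jump set. For BV functions, the chain rule (Vol'pert's averaged superposition) shows that on the approximate continuity set this measure equals $E'(\tilde u)\,(-\partial_t u-\partial_x G(u))$. This part vanishes because $u$ is a weak solution, so the part of $-\partial_t u-\partial_x G(u)$ carried by the continuity set is zero. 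Hence only the jump set contributes. There the density with respect to $\mathcal{H}^1$ restricted to the shock curves is proportional to the entropy dissipation
\[
	\mathcal{D}_E\eqdef S\,[E(u^+)-E(u^-)]-[F(u^+)-F(u^-)],
\]
with $S=S[u^+;u^-]$ fixed by Rankine--Hugoniot. So $u$ is an entropy solution if and only if $\mathcal{D}_E\ge 0$ at a.e.\ jump, for every convex $E$.

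Next I will show that $\mathcal{D}_E\ge 0$ for all convex $E$ is equivalent to Ole\u{\i}nik's chord condition. The first step is to reduce to Kru\v{z}kov entropies $E_k(v)=|v-k|$ with $F_k(v)=\mathrm{sgn}(v-k)(G(v)-G(k))$. These suffice, since any $\cC^2$ convex $E$ is an affine function plus $\int E''(k)\,|v-k|\,\dd k/2$, and affine entropies give zero dissipation by Rankine--Hugoniot. For $k$ outside the interval between $u^-$ and $u^+$ the dissipation again vanishes by Rankine--Hugoniot. For $k$ strictly between them, a direct computation, say with $u^-<u^+$, gives
\[
	\mathcal{D}_{E_k}=2\bigl[G(k)-\ell(k)\bigr],
\]
where $\ell$ is the chord through $(u^\pm,G(u^\pm))$. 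This is nonnegative for all such $k$ exactly when the chord lies below the graph. The case $u^->u^+$ is symmetric and gives the chord above the graph.

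The main obstacle is the first step: making the measure decomposition rigorous for a general BV-in-space function rather than a piecewise smooth one. In particular one must justify that the Cantor part contributes nothing, and that ``for a.e.\ $t$'' matches the space-time jump set. I would first try Vol'pert's theorem applied to the space-time BV function; $u$ is BV in space-time because it has bounded space variation and $\partial_t u=-\partial_x G(u)$ is a measure. Alternatively, since the result is quoted from \cite[Proposition 2.2]{bahadoran_euler_2006}, one may simply invoke it, together with Ballou \cite{ballou_solutions_1970}.
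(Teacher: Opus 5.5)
\textbf{A different route from the paper, and correct once two points are made explicit.} The paper gives no proof of this statement. It quotes it from \cite[Proposition 2.2]{bahadoran_euler_2006}, which rests on the classical BV theory of scalar conservation laws. Your proposal writes out that classical Vol'pert-type argument, and it is sound.
\begin{itemize}
\item $u\in BV_{\rm loc}$ in space-time follows as you say. Boundedness of $u$ and local Lipschitz continuity of $G$ make $\partial_xG(u)$, hence $\partial_tu$, a Radon measure.
\item The chain rule $D^cE(u)=E'(\tilde u)\,D^cu$ already covers the Cantor part. So the obstacle you flag there is resolved by the very theorem you invoke.
\item Orient the normal $\nu=(\nu_x,\nu_t)$ of $J_u$ with $\nu_x>0$ (justified below). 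Then the jump part of $-\partial_tE(u)-\partial_xF(u)$ is $\nu_x\,\mathcal{D}_E$ times $\mathcal{H}^1$ on $J_u$.
\item The Kru\v{z}kov reduction and the identity $\mathcal{D}_{E_k}=2[G(k)-\ell(k)]$ are correct. Use a countable set of $k$ and continuity in $k$ to get a single exceptional null set.
\end{itemize}
Compared with the citation, your argument is self-contained and shows exactly where bounded variation is used: it makes the entropy production concentrate on the jump set. The paper instead gains brevity by deferring all of this to the literature.

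\textbf{First point: $\nu_x\neq0$ and the passage to a.e.\ $t$.} You use that $S$ is ``fixed by Rankine--Hugoniot'' without noting what this requires. On $J_u$ the Rankine--Hugoniot relation reads $(u^+-u^-)\nu_t+(G(u^+)-G(u^-))\nu_x=0$. This forces $\nu_x\neq0$ $\mathcal{H}^1$-a.e.\ on $J_u$. The point matters for the ``if'' direction. A piece of $J_u$ with $\nu_x=0$ would lie in a single time slice, so it would be invisible to a condition imposed only for a.e.\ $t$, yet could still carry entropy production. Once $\nu_x\neq0$ is known, two tools convert ``$\mathcal{H}^1$-a.e.\ on $J_u$'' into ``for a.e.\ $t$, at every discontinuity of $u(\emparg,t)$'':
\begin{itemize}
\item the coarea formula on the rectifiable set $J_u$, where the tangential Jacobian of $(x,t)\mapsto t$ is $|\nu_x|$;
\item the BV slicing theorem: for a.e.\ $t$, the jump set and one-sided limits of $u(\emparg,t)$ are the section of $J_u$ and the traces $u^\pm$.
\end{itemize}

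\textbf{Second point: the initial term.} The paper's entropy inequality contains the term $\int E(u_0)(x)\,\varphi(x,0)\,\dd x$, but your measure decomposition only controls $\partial_tE(u)+\partial_xF(u)$ on $t>0$. For the ``if'' direction you therefore also need $u(\emparg,t)\to u_0$ in $L^1_{\rm loc}$ as $t\downarrow0$. This follows from the weak formulation, which gives weak convergence along a.e.\ $t$, combined with compactness from the uniform bound on the space variation. Then apply the interior inequality to $\varphi$ times a time cutoff vanishing near $0$, and let the cutoff tend to $\car_{(0,\infty)}$.
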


We start by considering Riemann initial data,  relying on Proposition \ref{prop:bgrs2-entropy-sol} 
to select the entropy solution among the weak ones and to determine it explicitly.

\subsection{Riemann case}\label{sec:riemann} 
When dealing with step (or  \emph{Riemann}) initial conditions, i.e., 
\begin{equation}\label{eq:initial-riemann}
	u_0=\lambda\car_{(-\infty,0)}+\rho \car_{[0,+\infty)}\comma\qquad \text{for some}\ \lambda, \rho \in [0,1]\comma  
\end{equation}
we look for \emph{self-similar weak solutions} $u(x,t)$ to equation \eqref{eq:cauchy} in the following form:
\begin{equation}
	u(x,t)= u(x/t,1)\equiv u(v,1)\comma\qquad v=x/t\fstop
\end{equation}
This suffices because of the invariance of both equation and initial condition under the  scaling $(x,t)\mapsto (ax,at)$, $a>0$. 
\par\medskip

The flux $G=G_\DEP$ given in \eqref{eq:HDL} satisfies 
\begin{equation}\label{eq:G'andG''}
	H(u)\eqdef G'(u)=1-12\tonde{u-\frac12}^2\comma\qquad G''(u)=24\tonde{\frac12-u}\fstop
\end{equation}
Hence, $G$ is strictly convex (resp.\ concave) for $u<1/2$ (resp.\ $u>1/2$), with a single inflection  point at $u=1/2$. Therefore, 
\cite[Proposition 2.1]{bahadoran_constructive_2002}, that we now quote, applies.	

\begin{proposition}\label{prop:bgrs2-prop2.1}(\cite[Proposition 2.1]{bahadoran_constructive_2002}).
For a flux $G\in\cC^2(\R)$, the self-similar entropy weak solution $u(v,1)$ of equation \eqref{eq:cauchy}
is the unique
global minimum of $G(s)-vs$ at its  points of continuity.
\end{proposition}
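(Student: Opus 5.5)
The statement is quoted from \cite[Proposition 2.1]{bahadoran_constructive_2002}, so strictly it only needs to be invoked. Still, a self-contained route for a general $G\in\cC^2(\R)$ would go through the Lax--Oleĭnik/convex-hull construction, and we sketch it below.

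\emph{Step 1: candidate solution.} Define
\[
\widetilde u(v):=\arg\min_{s\in[\lambda\wedge\rho,\lambda\vee\rho]}\ttonde{G(s)-vs}.
\]
Restricting the minimization to the interval between the two initial values is what makes the boundary data appear. We may assume $\lambda\le\rho$; the case $\lambda>\rho$ is symmetric, with minimization replaced by maximization, i.e.\ the concave hull instead of the convex one. Let $\widehat G$ be the lower convex envelope of $G$ on $[\lambda,\rho]$. Then $G(s)-vs$ and $\widehat G(s)-vs$ have the same minimum value. For all but countably many $v$, the minimizer is unique: these are exactly the slopes $v$ that do not coincide with the slope of a linear piece of $\widehat G$. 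Moreover, $\widetilde u(v)$ is nondecreasing in $v$, because it is the inverse of the (multivalued) derivative of $\widehat G$. Its jumps occur exactly at the slopes of the linear pieces of $\widehat G$.

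\emph{Step 2: weak solution.} On intervals of continuity where $\widetilde u(v)$ lies in the interior of $[\lambda,\rho]$, it satisfies the first-order condition $G'(\widetilde u(v))=v$. This is the rarefaction equation for a self-similar profile, since
\[
\partial_t u+\partial_x G(u)=t^{-1}\ttonde{G'(u)-v}\,\partial_v u=0.
\]
At a jump $v_0$, the two one-sided limits $u^\pm$ are the endpoints of a linear piece of $\widehat G$ with slope $v_0$. Hence $v_0=(G(u^+)-G(u^-))/(u^+-u^-)$, which is Rankine--Hugoniot. For $|v|$ large the minimizer is $\lambda$ or $\rho$, which recovers the initial data as $t\downarrow0$.

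\emph{Step 3: entropy condition.} At a jump, the chord between $u^-<u^+$ is a linear piece of the lower convex envelope, so it lies below the graph of $G$. This is exactly Oleĭnik's condition. The profile $\widetilde u$ is monotone, hence of bounded variation. By Proposition \ref{prop:bgrs2-entropy-sol}, it is therefore the entropy solution, and uniqueness of entropy solutions identifies it with $u(v,1)$.

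The main delicate point is Step 1, namely checking that the minimizer over $[\lambda\wedge\rho,\lambda\vee\rho]$ coincides with what the quoted statement calls the global minimum. For $\lambda\le\rho$ this requires minimizing over $[\lambda,\rho]$, and maximizing when $\lambda>\rho$. We would read the quoted statement with these conventions, following \cite{bahadoran_constructive_2002}. For the DEP flux, the resulting shocks and fans can then be computed explicitly using \eqref{eq:G'andG''}.
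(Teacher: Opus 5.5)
Your proposal is correct and takes essentially the paper's approach. The paper gives no proof beyond citing \cite[Proposition 2.1]{bahadoran_constructive_2002}, and your convex-envelope/Legendre construction, checked against Ole\u{\i}nik's condition via Proposition~\ref{prop:bgrs2-entropy-sol}, is the standard argument behind that result (the same construction the paper sketches with $G_*^u$ and $u^*$ in Section~\ref{sec:riemann}); your reading of the statement, minimizing over $[\lambda,\rho]$ when $\lambda\le\rho$ and maximizing over $[\rho,\lambda]$ otherwise, is the right one. The one point to tighten is Step~2 for a general $\cC^2$ flux, where jumps of $\widetilde u$ can accumulate so ``intervals of continuity'' need not exist; there the weak formulation follows directly from the measure identity $\dd(G\circ\widetilde u)=v\,\dd\widetilde u$, which you get by differentiating $G(\widetilde u(v))=\Psi(v)+v\,\widetilde u(v)$ with $\Psi(v)\eqdef\min_{s\in[\lambda,\rho]}(G(s)-vs)$ and $\Psi'=-\widetilde u$ a.e.
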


The explicit construction of entropy solutions follows by \emph{Step 2} in \cite[Section 2.1]{bahadoran_constructive_2002}, which we now briefly sketch. 
 
The characteristic speed $[0,1]\ni u\mapsto H(u)$ takes values in $[-2,1]$. Its  inverse branches read, for $v\in [-2,1]$, as
\begin{align}\label{eq:inverseH}
	H_{<1/2}^{-1}(v)=\frac12-\sqrt{\frac{1-v}{12}}\comma\qquad H_{>1/2}^{-1}(v)=\frac12+\sqrt{\frac{1-v}{12}}\fstop
\end{align}

Let $G_*^u$ denote the lower convex envelope of $G$  on the interval $(-\infty,u]$, while $G_u^*$ the upper convex envelope of $G$ on the interval $[u,+\infty)$.	For $u<1/2$, let $u^*=u^*(u)>1/2$ as the smallest  point where $G_*^u$ coincides with $G$; for $u>1/2$, similarly define $u_*=u_*(u)<1/2$ as the largest point where $G_*^u$ coincides with $G$. Hence, by finding $a=a(u)\in [0,1]$ which solves  $G'(a)=\frac{G(a)-G(u)}{a-u}$ for our flux $G=G_\DEP$, we obtain
\begin{equation}\label{eq:ustar}
	u^*= a(u)= \frac34-\frac{u}2\comma \text{for}\  u<\frac12\comma\qquad u_*=a(u)=\frac32-2u\comma \text{for}\  u>\frac12\fstop
\end{equation}

We find entropy solutions for the case
$\rho\le 1/2$; the case	 $\rho>1/2$ may be dealt with analogously and, thus, is left to the reader. 
\begin{enumerate}
	\item If $\lambda\le \rho$, the relevant part of the flux $G$ is convex; thus, $H(\lambda)<H(\rho)$, and the unique entropy solution is 
	the (continuous) \textit{rarefaction fan} (Figure \ref{fig:rarefaction}):	\begin{equation}\label{eq:rarefaction-fan}
		u(x,t)=u(x/t,1)= \begin{dcases}
			\lambda &\text{if}\ x/t\le H(\lambda)\\
			H_{<1/2}^{-1}(x/t) &\text{if}\ H(\lambda)<x/t<H(\rho)\\
			\rho &\text{if}\ x/t\ge H(\rho)\fstop
		\end{dcases}
	\end{equation}
	\item If $\lambda >\rho$, we further distinguish two cases:
	\begin{enumerate}\item  If $\lambda \le\rho^*=3/4-\rho/2$, we have $H(\lambda)>H(\rho^*)$;	 
	then, the unique entropy solution is  the \textit{shock} (Figure \ref{fig:shock}): 
		\begin{equation}\label{eq:shock}
			u(x,t)=u(x/t,1)=\begin{dcases}
				\lambda &\text{if}\ x/t< S[\lambda;\rho]
				\\
				\rho &\text{if}\ x/t>S[\lambda;\rho]\comma
			\end{dcases}
		\end{equation}
		where $S[\lambda;\rho]$ is identified by the Rankine-Hugoniot condition  \eqref{Rankine-Hugoniot}.
		\item  If $\lambda > \rho^*=3/4-\rho/2$, 	we have $H(\lambda)\le H(\rho^*)$; hence, the entropy solution is a mixed one, namely,  a rarefaction fan followed by a shock (Figure \ref{fig:rarefaction-shock}); this is called  a \emph{contact discontinuity} in \cite{ballou_solutions_1970}:
		\begin{equation}\label{eq:fake-contact-disc}
			u(x,t)=u(x/t,1)=\begin{dcases}
				\lambda &\text{if}\ x/t\le H(\lambda)\\
				H_{>1/2}^{-1}(x/t) &\text{if}\ H(\lambda)<x/t\le H(\rho^*)\\
				\rho &\text{if}\ x/t> H(\rho^*)\fstop
			\end{dcases}
		\end{equation}	
	\end{enumerate}
\end{enumerate}
	\begin{figure}[t]
	\centering
	\includegraphics[width=1\textwidth]{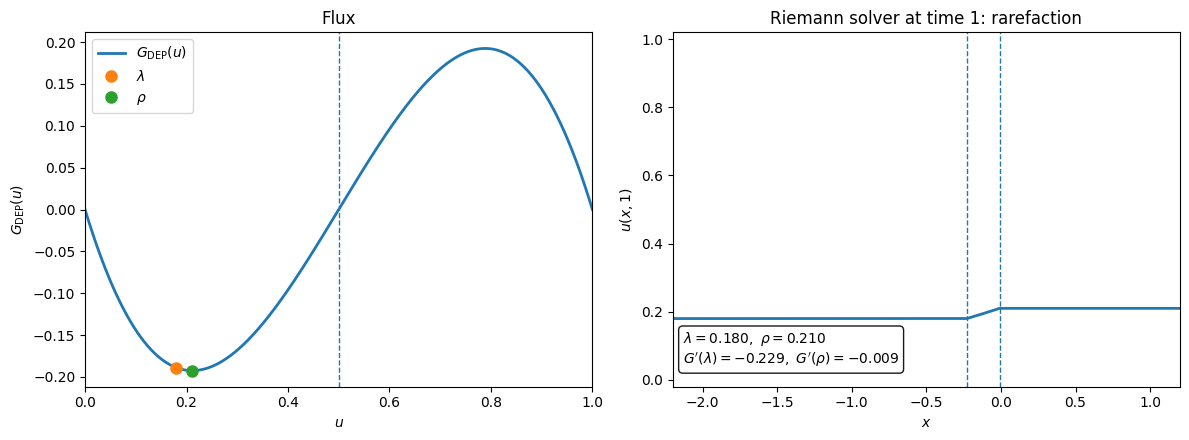}
	\caption{A rarefaction solution at time $t=1$.}
	\label{fig:rarefaction}
\end{figure} 
\begin{figure}[t]
	\centering
	\includegraphics[width=1\textwidth]{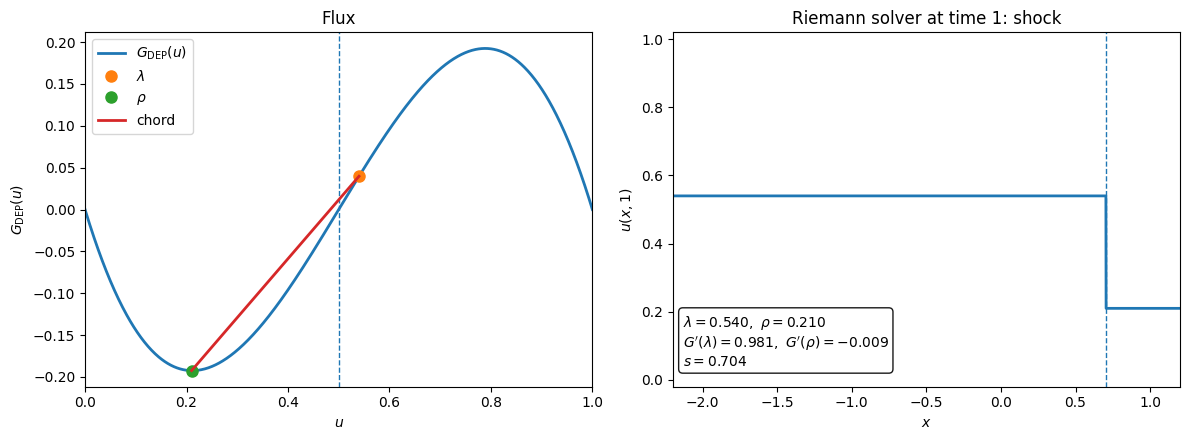}
	\caption{A shock solution at time $t=1$.}
	\label{fig:shock}
\end{figure} 
\begin{figure}[t]
	\centering
	\includegraphics[width=1\textwidth]{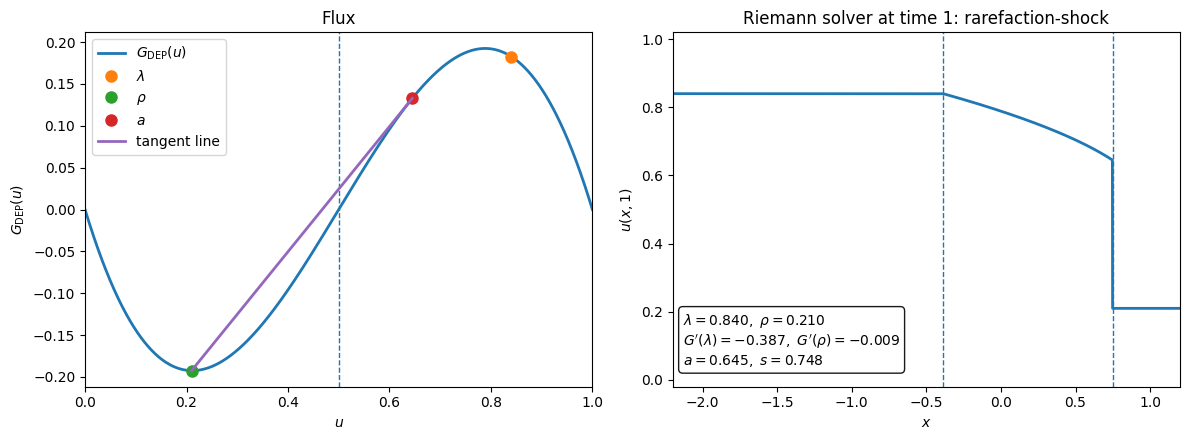}
	\caption{A rarefaction-shock solution at time $t=1$.}
	\label{fig:rarefaction-shock}
\end{figure}

\subsection{Conservation of local equilibrium from Riemann profiles}

We derive the following result,  corresponding to \cite[Theorem 2.1]{bahadoran_constructive_2002}. 
\begin{theorem}[Conservation of local equilibrium --- Riemann case]\label{th:local-eq-riemann}
	Let,  for some $\lambda, \rho\in [0,1]$, $\mu_{\lambda,\rho}$ be the product measure on $\X$ associated to $u_0=\lambda\car_{(-\infty,0)}+\rho\car_{[0,+\infty)}$ (given in \eqref{eq:initial-riemann}). Then, letting $u(\emparg,\emparg):\R\times [0,+\infty)\to [0,1]$ denote the entropy solution associated to $u_0$ (cf.\ Section \ref{sec:riemann}), we have, for all $t\ge 0$,
	\begin{equation}
		\lim_{\eps\to \to 0} \tau_{\lfloor x\eps^{-1}\rfloor}\tttonde{\mu_{\lambda,\rho} \cP_{t\eps^{-1}}} = \nu_{u(x,t)}\comma\qquad \text{for every continuity point $x\in \R$ of $u(\emparg,t)$}\fstop
	\end{equation}
\end{theorem}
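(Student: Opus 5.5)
The plan is to follow the proof of \cite[Theorem 2.1]{bahadoran_constructive_2002}, which only uses three ingredients. First, attractiveness and the monotone coupling (Corollary \ref{cor:monotone}). Second, the product invariant measures together with the characterization $(\cI\cap\cS)_e=(\nu_\rho)_{\rho\in[0,1]}$ (Propositions \ref{pr:invariance-product} and \ref{prop:IcapSe}). Third, the variational form of the Riemann entropy solution (Proposition \ref{prop:bgrs2-prop2.1}).

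\textbf{Step 1: flux identity.} Let $J_t(x)$ denote the net number of particles crossing the bond $(\lfloor x\rfloor,\lfloor x\rfloor+1)$ up to time $t$ (in the sense of \eqref{eq:jeta}). Using the martingale $J_t(x)-\int_0^t \tau_{-x} j(\eta_s)\,\dd s$, stationarity of $\nu_\rho$ and \eqref{eq:jtoG}, we obtain $\E_{\nu_\rho}[J_t(vt)] = t\,(G(\rho)-v\rho)$ on the moving bond at speed $v$.

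\textbf{Step 2: variational bound on the Riemann current.} Couple $\mu_{\lambda,\rho}$ with $\nu_r$ for an arbitrary $r\in[0,1]$. The macroscopic current of the $\mu_{\lambda,\rho}$-system across the ray $x=vt$ should then satisfy
\[
\liminf \eps\,\E\big[J_{t\eps^{-1}}(vt\eps^{-1})\big]\ \ge\ t\,\inf_{r}\big(G(r)-vr\big)
\]
for $r$ between $\lambda$ and $\rho$. Following \cite{bahadoran_constructive_2002}, this is proved by comparing with product measures that agree with $\mu_{\lambda,\rho}$ on one half-line. Attractiveness and finite propagation turn such a comparison into an inequality between currents. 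The matching upper bound comes from the reversed comparison.

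Identifying the current of the Riemann system with $t\min_{r}[G(r)-vr]$, the Legendre-type formula, and using Proposition \ref{prop:bgrs2-prop2.1}, the density $\eps\sum_{x\in(at\eps^{-1},bt\eps^{-1})}\eta_{t\eps^{-1}}(x)$ converges in mean to $t\int_a^b u(v,1)\,\dd v$. Here $u(v,1)$ is the explicit solution computed in Section \ref{sec:riemann}; its three regimes (rarefaction, shock, and rarefaction followed by shock) are all covered by the variational formula, so no case analysis is needed.

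\textbf{Step 3: from averaged densities to local equilibrium.} Take any subsequential limit $\pi$ of $\tau_{\lfloor x\eps^{-1}\rfloor}(\mu_{\lambda,\rho}\cP_{t\eps^{-1}})$ at a continuity point $x$. We need $\pi\in\cI\cap\cS$. Translation invariance follows because locally the measure varies slowly; concretely, $\mu_{\lambda,\rho}$ is stochastically ordered with respect to its shift by one site, and the order is preserved in time by attractiveness. Invariance follows since the shift is taken over a time window of order $\eps^{-1}$, after averaging over a short microscopic time window. By Proposition \ref{prop:IcapSe}, $\pi$ is then a mixture of the measures $\nu_r$.

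I expect the main obstacle to be showing that this mixture is degenerate, i.e.\ concentrated on $r=u(x,t)$. The approach, as in \cite{bahadoran_constructive_2002}, is to use monotonicity in the initial data. For $x/t=v$, the measure $\tau_{\lfloor x\eps^{-1}\rfloor}\mu_{\lambda,\rho}$ is comparable to Riemann measures whose solutions at nearby points $v\pm\delta$ bracket $u(v,1)$; monotone coupling of several copies is again available via Corollary \ref{cor:monotone}. The limit $\pi$ is thus squeezed between mixtures with means $u(v-\delta,1)$ and $u(v+\delta,1)$. Continuity at $v$ together with the mean identification from Step 2 forces $\pi=\nu_{u(x,t)}$.

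A remaining subtlety is that extremal stationary measures need not be stochastically ordered in general. Here they are, since $\nu_r$ is increasing in $r$, so the argument goes through.
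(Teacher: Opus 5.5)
Your proposal has genuine gaps in both Step 3 and Step 2.

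\textbf{Step 3: degeneracy of the mixture.} The step you flag as the main obstacle does not go through as proposed. Squeezing a limit point $\pi$ between the limits along the rays $v\pm\delta$, and knowing its mean is $u(v,1)$, cannot exclude a nondegenerate mixture $\int\nu_\alpha\,\gamma_v(\dd\alpha)$. Inside a rarefaction fan, take $\gamma_w=\tfrac12(\delta_{u(w,1)-\kappa(w)}+\delta_{u(w,1)+\kappa(w)})$ with $\kappa(w)>0$ small, e.g.\ proportional to the distance from $u(w,1)$ to $\{\lambda,\rho\}$. This family is stochastically monotone and continuous in $w$, supported between $\lambda$ and $\rho$, has the right means, and is not Dirac.

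The missing idea, used in the paper's third step, is that the mixing measure itself enters the macroscopic current. By Lemma \ref{lem:from-bgrs1-and-av}, limiting box densities are $F(v)-F(u)$ with $F(w)=\int[w\alpha-G(\alpha)]\,\gamma_w(\dd\alpha)$. So the current through the ray of speed $w$ is the $\gamma_w$-average of $G(\alpha)-w\alpha$. Once it is identified with the optimal value of $G(s)-ws$ on the interval between $\lambda$ and $\rho$, $\gamma_w$ must sit on the optimizer, which is unique at continuity points (Proposition \ref{prop:bgrs2-prop2.1}).

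This works for the time-Ces\`aro averages $\frac1T\int_0^T\mu\tau_{[vt]}\cP_t\,\dd t$, and that is also where invariance comes from. A fixed-time subsequential limit of $\tau_{\lfloor x\eps^{-1}\rfloor}(\mu_{\lambda,\rho}\cP_{t\eps^{-1}})$ is not known a priori to lie in $\cI$; averaging over a bounded microscopic time window does not give it. This is why the paper ends with a separate monotonicity argument turning Ces\`aro limits into weak limits.

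\textbf{Step 2: orientation of the current bound.} Coupling $\mu_{\lambda,\rho}$ with $\nu_r$ so that discrepancies have one sign on each half-line orders the height functions, since discrepancies never cross. This gives exactly one inequality for the ray current.
\begin{itemize}
\item For $\lambda<\rho$ and $r\in[\lambda,\rho]$, it is the upper bound $\limsup_{\eps\to0}\eps\,\E[J_{t\eps^{-1}}(vt\eps^{-1})]\le t\min_{r\in[\lambda,\rho]}(G(r)-vr)$, not a lower bound.
\item For $\lambda>\rho$ and $r\in[\rho,\lambda]$, it is $\liminf\ge t\max_{r\in[\rho,\lambda]}(G(r)-vr)$. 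In that case the Riemann current is this maximum, not $t\min_r(G(r)-vr)$.
\end{itemize}
So the case distinction cannot be dropped, although particle--hole symmetry reduces one case to the other.

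No ``reversed comparison'' gives the complementary inequality. It comes for free from the mixture structure, because $\int(G(\alpha)-v\alpha)\,\gamma_v(\dd\alpha)$ always lies between the min and the max of $G(s)-vs$ over the interval between $\lambda$ and $\rho$. Combining your one-sided coupling bound with the representation $\mu_v=\int\nu_\alpha\,\gamma_v(\dd\alpha)$ of the Ces\`aro limits would force $\gamma_v$ onto the unique optimizer, in the spirit of the current-based arguments of \cite{bgrs3}. That repair, however, relies on exactly the Ces\`aro-limit structure and final Ces\`aro-to-weak upgrade described above. As written, neither your Step 2 nor your Step 3 closes.
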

\begin{proof}
We follow the steps in \cite[Section 2.2]{bahadoran_constructive_2002} 
(see also \cite[Section 3]{andjel_vares_hydrodynamic_1987}),   that apply here without any change. We now summarize them.
The first step of the proof
is to show that a weak Ces\'aro limit of the measure of the process belongs to $\cI\cap \cS$. 
The second step is a computation of the Ces\'aro limiting density inside a macroscopic box. 
Both steps rely on attractiveness and on the characterization of $\cI\cap \cS$.
Let us now quote these two results:

\begin{lemma}\label{lem:from-bgrs1-and-av}
(\cite[Lemmas 2.3, 2.4]{bahadoran_constructive_2002},  \cite[Lemmas 3.1, 3.2]{andjel_vares_hydrodynamic_1987}).
Let $\mu$ be a probability measure on $\X$ such that: 
\begin{enumerate}[(a)]
	\item $\nu_\rho\leq \mu\leq\nu_\lambda$ for some $0\leq\rho<\lambda$; 
	\item either $\mu\tau_1\leq\mu$ or $\mu\tau_1\geq\mu$.
\end{enumerate}

Then, any sequence $T_n\to\infty$ has a subsequence $T_{n_m}$ for which there exists
a dense countable subset $D$ of $\R$ satisfying
\begin{equation}\label{eq:limiting-muv}
\lim_{m\to\infty}\frac{1}{T_{n_m}}\int_0^{T_{n_m}}\mu\tau_{[vt]}\cP_t\, \dd t=\int\nu_\alpha\,\gamma_v(\dd\alpha)=\mu_v\in\cI\cap \cS\comma\qquad v \in D\comma
\end{equation} 
where $\gamma_v$ is a probability measure on $[\rho,\lambda]$. 
Also, if $u<v$ are in $D$,
\begin{equation}\label{eq:limiting-density}
\lim_{m\to\infty}\mu\cP_{T_{n_m}}\left(\frac{1}{T_{n_m}}\sum_{[uT_{n_m}]}^{[vT_{n_m}]}\eta(x)\right)=F(v)-F(u)\comma
\end{equation}
with, for $w\in D$, $F(w)=\int\quadre{w\alpha-G(\alpha)}\gamma_w(\dd\alpha)$. 
\end{lemma}
 Note that the macroscopic flux $G=G_\DEP$ given in \eqref{eq:HDL}
appears in the function $F$ in \eqref{eq:limiting-density}.  
The third (and main) step, which consists in proving that $\gamma_v$ is the Dirac measure concentrated on $u(v,1)$, relies on Proposition
\ref{prop:bgrs2-prop2.1}. The last step is to prove
that Ces\'aro limits are actually weak limits; this is proved via monotonicity arguments.
\end{proof}

\subsection{From Riemann to general initial profiles}\label{subsec:cauchy}

For existence and uniqueness of entropy solutions to \eqref{eq:cauchy} with general, nonnegative, and bounded initial data, we refer to, e.g., \cite[Section 5]{serre_systems_1999},  \cite[Theorem 3.1]{bahadoran_constructive_2002}, and references therein.	

We now outline the proofs of Theorem \ref{th:HDL}, that is, the derivation of hydrodynamics in the Cauchy case,
 and of Theorem \ref{th:local-eq-general}, that is, conservation of local equilibrium.

\begin{proof}[Proofs of Theorems \ref{th:HDL} and \ref{th:local-eq-general}]

	The main result in \cite[Section 3]{bahadoran_constructive_2002} is the hydrodynamic limit from general initial conditions (\cite[Theorem 3.2]{bahadoran_constructive_2002}), and its proof fully adapts to our setting because: on the one hand, we already proved in Corollary \ref{cor:macrostab} that ${\rm DEP}$ is macroscopically stable; on the other hand, \cite[Theorem 3.1]{bahadoran_constructive_2002} on regularity properties of the macroscopic entropy solutions holds true in our case. Moreover, note that \cite[Lemma 3.1]{bahadoran_constructive_2002} (that is, finite propagation property)
	and \cite[Lemma 3.2]{bahadoran_constructive_2002}
	are proved for bounded jump rates and for finite-range jumps and interactions, thus, covering the example of ${\rm DEP}$. This proves our Theorem \ref{th:HDL}.

	As for Theorem \ref{th:local-eq-general}, by the  strategy outlined in, e.g., 	\cite[Chapter IX]{kipnis_scaling_1999} (see also \cite[Theorem 3]{landim_conservation_1993}), the result in Theorem \ref{th:local-eq-general} may be derived from a weak form of local equilibrium (as in \cite[Theorem 4.1]{landim_conservation_1993}), which is slightly stronger than the usual hydrodynamic limit for the empirical density fields. Note that \cite[Theorem 3]{landim_conservation_1993} assumes the macroscopic flux $G$ to be either convex or concave; this is only required for the existence and uniqueness of the entropy weak solution to \eqref{eq:cauchy}.
\end{proof}

\section{Strong hydrodynamic limit}\label{sec:strong-hydro}
We conclude this article by mentioning that we also have a strong hydrodynamic limit for ${\rm DEP}$
that we now state. 

Indeed, thanks to the graphical representation outlined in Section \ref{sec:graphical}, we construct infinitely many copies of ${\rm DEP}$
on the probability space
$(\Omega_0\times\Omega, \cF_0\otimes\cF, \P_0\otimes\P)$, where  
$(\Omega_0, \cF_0, \P_0)$ is a probability space used for the random initial states, 
and $(\Omega, \cF,\P)$ is a Poisson space used to construct the evolution from a given state. In what follows, we write $\widetilde \P=\P_0\otimes \P$.

\begin{theorem}[Strong hydrodynamic limit]\label{th:strong-HDL}
	Let $(\eta_0^n)_{n\in \N^*}\in \Omega_0$ be a sequence of $\X$-valued random variables with strong density profile $u_0$, i.e., $u_0:\R\to [0,1]$ is measurable and,  $\P_0$-a.s., one has
	\begin{equation}\label{eq:strong-density-profile}
		\frac1n\sum_{x\in \Z} f(\tfrac{x}n)\,\eta_0^n(x)
		\xrightarrow{n\to \infty}\int_{\R} f(x)\,u_0(x)\,\dd x\comma\qquad  f \in \cC_c(\R)\fstop
	\end{equation}
	Define, as in Section \ref{sec:graphical}, $\eta_t^n=\eta(\eta_0^n,\emparg,t)$, $n\in \N^*$ (i.e., $\eta(\eta_0^n,\emparg,0)=\eta_0^n$ for each $n\in \N^*$, but employing a common set of Poisson clocks). 
	Then,  letting $u(\emparg,\emparg):\R\times [0,\infty)\to [0,1]$ 
	be the  
	entropy solution to \eqref{eq:HDL} with initial condition $u_0$, we have, $\widetilde\P$-a.s.,
	\begin{align}\label{eq:strong-hydro}
	\sup_{t \in [0,T]}\abs{	\frac1n\sum_{x\in \Z} f(\tfrac{x}n)\,\eta_{tn}^n(x)-\int_{\R} f(x)\,u(x,t)\, \dd x}\xrightarrow{n\to\infty}0\comma\qquad T>0\comma f \in \cC_c(\R)\fstop	\end{align}
\end{theorem}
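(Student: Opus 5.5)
We follow the strong (almost sure) constructive approach of \cite{bahadoran_euler_2006,bgrs3,bgrs5}, which upgrades the Riemann-to-Cauchy scheme of Section~\ref{subsec:cauchy} to an almost sure statement on the common Poisson space. The inputs needed are exactly the properties established in Section~\ref{sec:coupling}: monotonicity of an arbitrary (countable) family of copies driven by the same clocks $\omega$ (Corollary~\ref{cor:monotone}, applied pairwise), exact macroscopic stability in the $\varDelta$-distance (Corollary~\ref{cor:macrostab}), and finite propagation.

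\emph{Step 1: strong Riemann hydrodynamics.} First we prove \eqref{eq:strong-hydro} for Riemann data $u_0=\lambda\car_{(-\infty,0)}+\rho\car_{[0,\infty)}$, starting from a fixed product configuration (e.g.\ built from i.i.d.\ uniforms in $\Omega_0$). We would follow \cite[Section~3]{bahadoran_euler_2006}. The current through a moving line $\lfloor vt\rfloor$ is compared with that of stationary processes $\nu_\alpha$ coupled through $\omega$. Under a product stationary measure, the ergodicity of the joint space-time shift on $\Omega_0\times\Omega$ (the graphical construction is translation covariant) gives almost sure laws of large numbers for currents. Attractiveness then yields almost sure bounds on the Riemann current by the variational quantity $\sup/\inf_\alpha[G(\alpha)-v\alpha]$ of Proposition~\ref{prop:bgrs2-prop2.1}.

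The key point is that the limit of currents in Lemma~\ref{lem:from-bgrs1-and-av} is identified using Proposition~\ref{prop:IcapSe}. The almost sure version replaces Ces\`aro means of laws with a subadditive or ergodic argument on the currents, possibly using a Borel--Cantelli argument along a geometric time sequence together with bounded rates to fill in the intermediate times.

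\emph{Step 2: Cauchy data by approximation.} For general $u_0$ we approximate it, in the $\varDelta$-type distance, by step functions. On short macroscopic time intervals $[k\delta,(k+1)\delta]$ the microscopic configuration is replaced by one built from local Riemann problems. Finite propagation makes these problems non-interacting for small $\delta$, and Step~1 together with the countably many configurations coupled through $\omega$ gives almost sure convergence for each. The errors accumulated at each step are controlled by Corollary~\ref{cor:macrostab}: since $\varDelta(\zeta_t,\xi_t)\le\varDelta(\zeta_0,\xi_0)$ holds pathwise for every $\omega$, the error does not grow. At the macroscopic level, the matching estimate is the $L^1$ or $\varDelta$-stability of entropy solutions together with the convergence of the Glimm-type scheme (\cite[Theorem~3.1]{bahadoran_constructive_2002}, \cite{bgrs5}).

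Corollary~\ref{cor:macrostab} is stated for finite configurations. We extend it to the infinite setting by localization: finite propagation confines the dynamics to a window of size $O(n)$, and we truncate the configurations outside that window. The supremum over $t\in[0,T]$ follows from the result at fixed rational times, using that the number of clock rings in the support of $f$ over time $tn$ is almost surely $O(n)$ per unit time.

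\emph{Main obstacle.} We expect Step~1 to be the hardest part, specifically the almost sure identification of the Riemann limit. The weak argument of Theorem~\ref{th:local-eq-riemann} works with laws, so to pass to an almost sure statement we must couple the Riemann process with the $\nu_\alpha$-processes on the same $\omega$ and use the ergodic theorem jointly in $(\eta_0,\omega)$. Here we would first try the ``interface'' argument of \cite[Proposition~3.1]{bahadoran_euler_2006}, in which the Riemann configuration is sandwiched between stationary configurations that agree with it on half-lines. We would then check that its hypotheses hold for $\rm DEP$: attractiveness of the multi-copy coupling, product invariant measures, and the triviality of $(\cI\cap\cS)_e$.
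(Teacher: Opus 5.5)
Your architecture is the paper's own: it defers to \cite{bgrs3}, first proving almost sure Riemann hydrodynamics for currents (space-time ergodic theorem, attractiveness, Proposition~\ref{prop:IcapSe}), then running a Glimm-type approximation based on macroscopic stability and finite propagation. Your remark that Corollary~\ref{cor:macrostab} holds pathwise is a real simplification over the general setting of \cite{bgrs3}. There, stability is only available with an exponential bound.

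\textbf{The gap.} The step ``Step~1 together with the countably many configurations coupled through $\omega$ gives almost sure convergence for each'' does not hold as justified. A local Riemann problem created at macroscopic time $k\delta$ around position $x_i$ uses only the clocks after microscopic time $k\delta n$ near site $\lfloor x_i n\rfloor$. It is therefore a Step~1 process driven by $\theta_{s_n}\omega$, where $\theta_s$ shifts the clocks in space-time and $s_n=(\lfloor x_i n\rfloor,k\delta n)$ depends on $n$. The spatial part of this shift is already present at $k=0$. From $F_n(\omega)\to0$ almost surely you only get $F_n(\theta_{s_n}\omega)\to0$ in probability, because each shift preserves $\P$; almost sure convergence does not follow. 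Countability of the family is irrelevant; the obstruction is the $n$-dependence of the shifts. For example, $F_n(\omega)=\car\{\overline{\omega}(\{0\}\times[0,\log\log n])=0\}$ tends to $0$ almost surely. Yet its time shift by $n$ equals $1$ infinitely often almost surely, by the second Borel--Cantelli lemma applied along $n=k^2$, where the events are independent with probabilities $(2\log k)^{-2}$.

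\textbf{What is missing.} The paper points to this when it says that each approximation step needs a control with exponential bounds and that the Riemann step also uses large deviation estimates for the empirical measure. You need one of two things:
\begin{itemize}
\item summable (for instance exponentially small in $n$) probability bounds at every approximation step, so that Borel--Cantelli can be applied along the $n$-dependent shifts; or
\item a version of Step~1 that holds almost surely and simultaneously for all space-time shifts proportional to $n$.
\end{itemize}
In \cite{bgrs3} this is the role of the space-time ergodic theorem combined with large-deviation estimates, and of the exponential bounds at each step. Likewise, finite propagation must be used in its exponential form, $1-e^{-Ct}$ with $t\asymp\delta n$, to decouple the local problems almost surely. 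So the main obstacle is not only identifying the Riemann limit in Step~1 but this uniformity over shifts.

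Also, the ``subadditive'' option you mention in Step~1 should not be relied on. \cite{bgrs3} states that the subadditive ergodic theorem, central to earlier (totally asymmetric) works, is no longer effective in its setting, and ${\rm DEP}$ is not totally asymmetric.
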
   

The proof of  this theorem follows the lines of \cite{bgrs3}:
 we still have to first consider the Riemann case, then to go to the general case using an approximation scheme.
The main change with the previous approach is that now currents become the central object to deal with.
To solve the Riemann problem, we combine proofs of almost sure analogues for currents of the results of 
\cite{andjel_vares_hydrodynamic_1987, bahadoran_constructive_2002, bahadoran_euler_2006}
  with a space-time ergodic
theorem for particle systems and with large deviation estimates for the empirical measure. 
In the approximation steps, we need estimates uniform in time, and each approximation step requires
a control with exponential bounds. 
For further details, we refer to \cite{bgrs3}.

\subsection*{Acknowledgments} F.S.\ thanks MAP5 lab for hospitality and financial support. While this work was written, the same author was associated to INdAM (Istituto
Nazionale di Alta Matematica “Francesco Severi”), the group GNAMPA, and the GNAMPA-INdAM project “Stochastic exchange models: from kinetic theory to opinion dynamics”.
We would also like to thank Giuseppe Cannizzaro for interesting discussions.

\subsection*{Data availability} Data sharing not applicable to this article as no datasets were generated or analyzed during the current study.

\subsection*{Conflicts of interest} All authors declare that they have no conflicts of interest.


\begin{thebibliography}{PBMH92}
	
	\bibitem[AV87]{andjel_vares_hydrodynamic_1987}
	Enrique~Daniel Andjel and Maria~Eul\'{a}lia Vares.
	\newblock Hydrodynamic equations for attractive particle systems on {${\bf
			Z}$}.
	\newblock {\em J. Statist. Phys.}, 47(1-2):265--288, 1987.
	
	\bibitem[Bal70]{ballou_solutions_1970}
	Donald~P. Ballou.
	\newblock Solutions to nonlinear hyperbolic {C}auchy problems without convexity
	conditions.
	\newblock {\em Trans. Amer. Math. Soc.}, 152:441--460 (1971), 1970.
	
	\bibitem[BGRS02]{bahadoran_constructive_2002}
	C.~Bahadoran, H.~Guiol, K.~Ravishankar, and E.~Saada.
	\newblock A constructive approach to {E}uler hydrodynamics for attractive
	processes. {A}pplication to {$k$}-step exclusion.
	\newblock {\em Stochastic Process. Appl.}, 99(1):1--30, 2002.
	
	\bibitem[BGRS06]{bahadoran_euler_2006}
	C.~Bahadoran, H.~Guiol, K.~Ravishankar, and E.~Saada.
	\newblock Euler hydrodynamics of one-dimensional attractive particle systems.
	\newblock {\em Ann. Probab.}, 34(4):1339--1369, 2006.
	
	\bibitem[BGRS10]{bgrs3}
	Christophe Bahadoran, Herv{\'e} Guiol, Krishnamurthi Ravishankar, and Ellen
	Saada.
	\newblock Strong hydrodynamic limit for attractive particle systems on
	{{\(\mathbb{Z}\)}}.
	\newblock {\em Electron. J. Probab.}, 15:1--43, 2010.
	\newblock Id/No 1.
	
	\bibitem[BGRS19]{bgrs5}
	Christophe Bahadoran, Herv{\'e} Guiol, Krishnamurthi Ravishankar, and Ellen
	Saada.
	\newblock Constructive {Euler} hydrodynamics for one-dimensional attractive
	particle systems.
	\newblock In {\em Sojourns in probability theory and statistical physics. III.
		Interacting particle systems and random walks, a festschrift for Charles M.
		Newman}, pages 43--89. Singapore: Springer; Shanghai: NYU Shanghai, 2019.
		
	\bibitem[BKS85]{vanbeijeren_kutner_spohn_excess_1985}
	H.~van Beijeren, R.~Kutner, and H.~Spohn.
	\newblock Excess noise for driven diffusive systems.
	\newblock {\em Phys. Rev. Lett.}, 54(18):2026--2029, 1985.
	
	\bibitem[BPB94]{binder1994scaling}
	P.-M. Binder, M.~Paczuski, and Mustansir Barma.
	\newblock Scaling of fluctuations in one-dimensional interface and hopping
	models.
	\newblock {\em Phys. Rev. E}, 49(2):1174, 1994.
	
	\bibitem[CDR16]{crawford2016invariance}
	Nick Crawford and Wojciech De~Roeck.
	\newblock Invariance principle for \textquoteleft push\textquoteright\ tagged
	particles for a {T}oom interface.
	\newblock {\em arXiv:1610.07765}, 2016.
	
	\bibitem[CET23]{cannizzaro_erhard_toninelli_stationary_2023}
	Giuseppe Cannizzaro, Dirk Erhard, and Fabio Toninelli.
	\newblock The stationary {AKPZ} equation: logarithmic superdiffusivity.
	\newblock {\em Comm. Pure Appl. Math.}, 76(11):3044--3103, 2023.
	
	
	\bibitem[CK20]{crawford_kozma_toom_2020}
	Nicholas Crawford and Gady Kozma.
	\newblock The {T}oom interface via coupling.
	\newblock {\em J. Stat. Phys.}, 179(2):408--447, 2020.
	
	\bibitem[DLSS91]{derrida_dynamics_1991}
	B.~Derrida, J.~L. Lebowitz, E.~R. Speer, and H.~Spohn.
	\newblock Dynamics of an anchored {T}oom interface.
	\newblock {\em J. Phys. A}, 24(20):4805--4834, 1991.
	
	\bibitem[DMP91]{de_masi_mathematical_1991}
	Anna De~Masi and Errico Presutti.
	\newblock {\em Mathematical methods for hydrodynamic limits}, volume 1501 of
	{\em Lecture Notes in Mathematics}.
	\newblock Springer-Verlag, Berlin, 1991.
	
	\bibitem[DS92]{devilliard_spohn_universality_1992}
	P.~Devillard and H.~Spohn.
	\newblock Universality class of interface growth with reflection symmetry.
	\newblock {\em J. Statist. Phys.}, 66(3-4):1089--1099, 1992.
	
	\bibitem[Dur95]{durrett_saint-flour}
	Rick Durrett.
	\newblock Ten lectures on particle systems.
	\newblock In {\em Lectures on probability theory. Ecole d'\'et\'e de
		probabilit\'es de Saint-Flour XXIII - 1993. Lectures given at the summer
		school in Saint- Flour, France, August 18-September 4, 1993}, pages 97--201.
	Berlin: Springer-Verlag, 1995.
	
	\bibitem[GS23]{gobron_saada_couplings_2023}
	Thierry Gobron and Ellen Saada.
	\newblock Couplings and attractiveness for general exclusion processes.
	\newblock In {\em Couplings and attractiveness for general exclusion
		processes}, volume~38 of {\em Ensaios Mat.}, pages 263--313. Soc. Brasil.
	Mat., Rio de Janeiro, 2023.
	
	\bibitem[Har72]{harris_1972}
	T.~E. Harris.
	\newblock Nearest-neighbor {Markov} interaction processes on multidimensional
	lattices.
	\newblock {\em Adv. Math.}, 9:66--89, 1972.
	
	\bibitem[Har78]{harris_1978}
	T.~E. Harris.
	\newblock Additive set-valued {Markov} processes and graphical methods.
	\newblock {\em Ann. Probab.}, 6:355--378, 1978.
	
	\bibitem[KL99]{kipnis_scaling_1999}
	Claude Kipnis and Claudio Landim.
	\newblock {\em Scaling limits of interacting particle systems}, volume 320 of
	{\em Grundlehren der Mathematischen Wissenschaften}.
	\newblock Springer-Verlag, Berlin, 1999.
	
	\bibitem[Lan93]{landim_conservation_1993}
	C.~Landim.
	\newblock Conservation of local equilibrium for attractive particle systems on
	{${\bf Z}^d$}.
	\newblock {\em Ann. Probab.}, 21(4):1782--1808, 1993.
	
	\bibitem[Lig99]{liggett_1999}
	Thomas~M. Liggett.
	\newblock {\em Stochastic interacting systems: contact, voter and exclusion
		processes}, volume 324 of {\em Grundlehren Math. Wiss.}
	\newblock Berlin: Springer, 1999.
	
	\bibitem[Lig05]{liggett_interacting_2005-1}
	Thomas~M. Liggett.
	\newblock {\em Interacting particle systems}.
	\newblock Classics in Mathematics. Springer-Verlag, Berlin, 2005.
	\newblock Reprint of the 1985 original.
	

	\bibitem[PBMH92]{paczuski1992fluctuations}
	Maya Paczuski, Mustansir Barma, S.~N. Majumdar, and T.~Hwa.
	\newblock Fluctuations of a nonequililbrium interface.
	\newblock {\em Phys. Rev. Lett.}, 69(18):2735, 1992.
	
	\bibitem[Pru04]{pruessner2004drift}
	Gunnar Pruessner.
	\newblock Drift causes anomalous exponents in growth processes.
	\newblock {\em Phys. Rev. Lett.}, 92(24):246101, 2004.
	
	\bibitem[Ser99]{serre_systems_1999}
	Denis Serre.
	\newblock {\em Systems of conservation laws. 1}.
	\newblock Cambridge University Press, Cambridge, 1999.
	\newblock Hyperbolicity, entropies, shock waves, Translated from the 1996
	French original by I. N. Sneddon.
	
	\bibitem[Spo14]{spohn_nonlinear_2014}
	Herbert Spohn.
	\newblock Nonlinear fluctuating hydrodynamics for anharmonic chains.
	\newblock {\em J. Stat. Phys.}, 154(5):1191--1227, 2014.
	
	\bibitem[SW23]{shapira_wiese_anchored_2023}
	Assaf Shapira and Kay~J\"org Wiese.
	\newblock Anchored advected interfaces, {O}slo model, and roughness at
	depinning.
	\newblock {\em J. Stat. Mech. Theory Exp.}, (6):Paper No. 063202, 25, 2023.
	

	
\end{thebibliography}
\end{document}